\newtheorem{theorem}{Theorem}[section]
\newtheorem{proposition}[theorem]{Proposition}
\newtheorem{lemma}[theorem]{Lemma}
\newtheorem*{theorem*}{Theorem}	
\theoremstyle{definition}
\newtheorem{definition}[theorem]{Definition}
\newtheorem{remark}[theorem]{Remark}
\newtheorem{example}[theorem]{Example}
\begin{document}	

\title[Conditional nonlinear expectations]{Conditional nonlinear expectations}
\author[Daniel Bartl]{Daniel Bartl$^*$}	
\thanks{$^*$Department of Mathematics, University of Vienna, daniel.bartl@unvie.ac.at}
\keywords{nonlinear expectations, dual representation, Choquet capacity,
tower property, dynamic programming, mathematical finance under uncertainty}
\date{\today}
\subjclass[2010]{45N30, 93E20, 46N10} 
%45Nxx - Abstract integral equations, integral equations in abstract spaces
%93E20 - Optimal stochastic control
%46N10 - Applications in optimization, convex analysis, mathematical programming, economics

\begin{abstract} 
	Let $\Omega$ be a Polish space with Borel $\sigma$-field $\mathcal{F}$ and countably generated sub $\sigma$-field $\mathcal{G}\subset\mathcal{F}$. Denote by $\mathcal{L}(\mathcal{F})$ the set of all bounded $\mathcal{F}$-upper semianalytic functions from $\Omega$ to the reals and by $\mathcal{L}(\mathcal{G})$ the subset of $\mathcal{G}$-upper semianalytic functions. Let $\mathcal{E}(\cdot|\mathcal{G})\colon\mathcal{L}(\mathcal{F})\to\mathcal{L}(\mathcal{G})$ be a sublinear increasing functional which leaves $\mathcal{L}(\mathcal{G})$ invariant. It is shown that there exists a $\mathcal{G}$-analytic set-valued mapping $\mathcal{P}_{\mathcal{G}}$ from $\Omega$ to the set of probabilities which are concentrated on atoms of $\mathcal{G}$ with compact convex values such that $\mathcal{E}(X|\mathcal{G})(\omega)=$ $\sup_{P\in\mathcal{P}_{\mathcal{G}}(\omega)} E_P[X]$ if and only if $\mathcal{E}(\cdot |\mathcal{G})$ is pointwise continuous from below and continuous from above on the continuous functions. Further, given another sublinear increasing functional $\mathcal{E}(\cdot)\colon\mathcal{L}(\mathcal{F})\to\mathbb{R}$ which leaves the constants invariant, the tower property $\mathcal{E}(\cdot)=\mathcal{E}(\mathcal{E}(\cdot|\mathcal{G}))$ is characterized via a pasting property of the representing sets of probabilities, and the importance of analytic functions is explained. Finally, it is characterized when a nonlinear version of Fubini's theorem holds true and when the product of a set of probabilities and a set of kernels is compact.
\end{abstract}

\maketitle
\setcounter{equation}{0}

\section{Introduction}

The Daniell-Stone theorem is a basic but essential result in measure- and integration theory and states that an linear increasing functional preserving the constants has an integral representation w.r.t.~a probability measure if and only if it satisfies the monotone convergence property.
Its nonlinear version is significantly more involved, builds on capacity theory, and is mainly due to Choquet \cite{choquet1959forme}:
Let $\Omega$ be a Polish space with Borel $\sigma$-field $\mathcal{F}$ and denote by $\mathfrak{P}(\Omega)$ the set of all probabilities on $\mathcal{F}$.
Further write $C_b(\Omega)$ and $\mathcal{L}(\mathcal{F})$ for the set of bounded functions from $\Omega$ to $\mathbb{R}$ which are continuous and $\mathcal{F}$-upper semianalytic, respectively (see Appendix \ref{sec:appendix.analytic}). 

\begin{theorem*}[Choquet]
	Let $\mathcal{E}(\cdot)\colon\mathcal{L}(\mathcal{F})\to\mathbb{R}$ be a  sublinear expectation (i.e.~a sublinear increasing functional satisfying  $\mathcal{E}(X)=X$ for all constant functions $X\in\mathbb{R}$).
	Then there exists a convex and weakly compact set $\mathcal{P}\subset\mathfrak{P}(\Omega)$ such that
	\begin{align}
	\label{eq:intro.choquet}
	\mathcal{E}(X)=\sup_{P\in\mathcal{P}} E_P[X]\quad\text{for all }X\in\mathcal{L}(\mathcal{F})
	\end{align}
	if and only if $\mathcal{E}(X_n)\downarrow\mathcal{E}(X)$ for every sequence $X_n\in C_b(\Omega)$ with $X_n\downarrow X\in \mathcal{L}(\mathcal{F})$ pointwise and $\mathcal{E}(X_n)\uparrow \mathcal{E}(X)$ for every sequence $X_n\in\mathcal{L}(\mathcal{F})$ with $X_n\uparrow X\in\mathcal{L}(\mathcal{F})$ pointwise.
\end{theorem*}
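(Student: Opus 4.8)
The plan is to prove both implications; necessity is routine, and the construction of $\mathcal{P}$ is the substantive direction.

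\emph{Necessity.} Suppose $\mathcal{E}(X)=\sup_{P\in\mathcal{P}}E_P[X]$ with $\mathcal{P}\subset\mathfrak{P}(\Omega)$ convex and weakly compact. Continuity from below requires no compactness: for $X_n\uparrow X$ pointwise, monotone convergence gives $E_P[X_n]\uparrow E_P[X]$ for each $P$, and interchanging the two suprema yields $\mathcal{E}(X_n)=\sup_P E_P[X_n]\uparrow\sup_P\sup_n E_P[X_n]=\mathcal{E}(X)$. For continuity from above on $C_b(\Omega)$, let $X_n\in C_b(\Omega)$ with $X_n\downarrow X\in\mathcal{L}(\Omega)$; since $P\mapsto E_P[X_n]$ is weakly continuous, the supremum defining $\mathcal{E}(X_n)$ is attained at some $P_n\in\mathcal{P}$, and by weak compactness a subnet $P_{n_\alpha}$ converges to some $Q\in\mathcal{P}$. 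For fixed $m$ and $n_\alpha\geq m$ one has $E_{P_{n_\alpha}}[X_{n_\alpha}]\leq E_{P_{n_\alpha}}[X_m]\to E_Q[X_m]$, hence $\lim_n\mathcal{E}(X_n)\leq E_Q[X_m]$ for every $m$; letting $m\to\infty$ (dominated convergence) gives $\lim_n\mathcal{E}(X_n)\leq E_Q[X]\leq\mathcal{E}(X)$, while the reverse inequality is monotonicity. The same argument shows $\mathcal{E}(X_n)\downarrow\mathcal{E}(X)$ whenever $C_b(\Omega)\ni X_n\downarrow X$ with $X$ merely bounded and universally measurable, a fact reused below.

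\emph{Construction of $\mathcal{P}$ and weak compactness.} Assume the two continuity properties and put $\mathcal{P}:=\{P\in\mathfrak{P}(\Omega):E_P[X]\leq\mathcal{E}(X)\text{ for all }X\in C_b(\Omega)\}$, which is clearly convex and weakly closed. Tightness comes from continuity from above: fixing a complete metric and a dense sequence $(x_i)$ on $\Omega$, choose for $m,N\in\mathbb{N}$ a function $\phi_{m,N}\in C_b(\Omega)$ with $0\leq\phi_{m,N}\leq 1$, equal to $1$ off $\bigcup_{i\leq N}B(x_i,1/m)$ and to $0$ on $\bigcup_{i\leq N}B(x_i,1/2m)$. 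Then $\phi_{m,N}\downarrow 0$ pointwise as $N\to\infty$, so $\sup_{P\in\mathcal{P}}E_P[\phi_{m,N}]\leq\mathcal{E}(\phi_{m,N})\downarrow\mathcal{E}(0)=0$; choosing $N_m$ with this supremum at most $\varepsilon 2^{-m}$ and setting $K:=\bigcap_m\overline{\bigcup_{i\leq N_m}B(x_i,1/m)}$ gives a compact set with $P(K^c)\leq\varepsilon$ for all $P\in\mathcal{P}$. Prokhorov's theorem then shows $\mathcal{P}$ is weakly compact.

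\emph{Representation on $C_b(\Omega)$.} Fix $X_0\in C_b(\Omega)$. Since $\mathcal{E}|_{C_b(\Omega)}$ is sublinear, Hahn--Banach produces a linear $\ell\leq\mathcal{E}$ on $C_b(\Omega)$ with $\ell(X_0)=\mathcal{E}(X_0)$; monotonicity and $\mathcal{E}(c)=c$ force $\ell$ to be positive with $\ell(1)=1$, and continuity from above on $C_b(\Omega)$ forces $\ell(f_n)\to 0$ whenever $f_n\in C_b(\Omega)$, $f_n\downarrow 0$. By the Daniell--Stone theorem — using that on a metrizable space the Baire and Borel $\sigma$-algebras coincide and that finite Borel measures on a Polish space are tight — $\ell=E_P[\,\cdot\,]$ for some $P\in\mathfrak{P}(\Omega)$, which by construction lies in $\mathcal{P}$. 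Hence $\mathcal{E}(X)=\sup_{P\in\mathcal{P}}E_P[X]$ for all $X\in C_b(\Omega)$, the supremum attained.

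\emph{Extension to $\mathcal{L}(\Omega)$.} Set $\mathcal{E}'(X):=\sup_{P\in\mathcal{P}}E_P[X]$ for $X\in\mathcal{L}(\Omega)$ (well defined since upper semianalytic functions are universally measurable). Then $\mathcal{E}$ and $\mathcal{E}'$ are sublinear expectations on $\mathcal{L}(\Omega)$ agreeing on $C_b(\Omega)$, and both satisfy the two continuity hypotheses — for $\mathcal{E}'$, continuity from below is the interchange of suprema and continuity from above on $C_b(\Omega)$ is the remark closing the first part. A routine diagonal argument upgrades continuity from above on $C_b(\Omega)$ to continuity from above along decreasing sequences of bounded upper semicontinuous functions; since each such function is a pointwise decreasing limit of $C_b(\Omega)$ functions, $\mathcal{E}=\mathcal{E}'$ on the bounded upper semicontinuous functions. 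For bounded upper semianalytic $X$, without loss of generality $0\leq X\leq 1$, inner regularity of each Radon measure $P$ yields $E_P[X]=\sup\{E_P[g]:g\leq X,\ g\text{ bounded u.s.c.}\}$, and taking suprema over $P$ and using the previous sentence gives $\mathcal{E}'(X)=\sup\{\mathcal{E}(g):g\leq X,\ g\text{ bounded u.s.c.}\}\leq\mathcal{E}(X)$. Thus, to obtain $\mathcal{E}=\mathcal{E}'$ on all of $\mathcal{L}(\Omega)$, it remains only to prove the reverse \emph{inner regularity of $\mathcal{E}$ itself}, $\mathcal{E}(X)=\sup\{\mathcal{E}(g):g\leq X,\ g\text{ bounded u.s.c.}\}$ for bounded upper semianalytic $X$. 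I expect this to be the main obstacle: it is a functional version of Choquet's capacitability theorem, established by running Choquet's scheme on the analytic superlevel sets $\{X>t\}$ — continuity from below of $\mathcal{E}$ handling increasing unions and the upper semicontinuous form of continuity from above handling decreasing sequences of compacts, so that each $\{X>t\}$ is approximable from inside by compact sets — after which one assembles compact inner approximations across levels into $[0,1]$-valued step minorants $g$ and lets the approximation error tend to zero. Everything preceding this capacitability argument is soft.
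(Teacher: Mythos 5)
Your overall architecture coincides with the paper's sketch in Appendix~B: Hahn--Banach plus Daniell--Stone on $C_b(\Omega)$, compactness of $\mathcal{P}$ extracted from continuity from above (the paper packages this as Banach--Alaoglu plus Daniell--Stone, you as tightness plus Prokhorov --- both work), passage to $usc_b(\Omega)$ (your interlacing argument $h_k:=\min(f_{1,k},\dots,f_{k,k})$ is a legitimate and even slightly more elementary alternative to the minimax theorem the paper uses), and finally a capacitability argument to reach the upper semianalytic functions. The necessity direction and the inequality $\mathcal{E}'\leq\mathcal{E}$ via inner regularity of each $P\in\mathcal{P}$ are correct as written.

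The gap is exactly where you place it, but the repair strategy you sketch for it would not work. For the reverse inequality $\mathcal{E}(X)\leq\sup\{\mathcal{E}(g):g\leq X,\ g\in usc_b(\Omega)\}$ you propose to apply set-capacitability to each superlevel set $\{X>t\}$ and then ``assemble compact inner approximations across levels into step minorants.'' For a merely sublinear $\mathcal{E}$ this assembly fails: from compacts $K_k\subset A_k$ with $\mathcal{E}(1_{A_k})-\mathcal{E}(1_{K_k})\leq\varepsilon$ one cannot conclude that $\mathcal{E}\bigl(\tfrac1n\sum_k 1_{A_k}\bigr)-\mathcal{E}\bigl(\tfrac1n\sum_k 1_{K_k}\bigr)$ is small. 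Sublinearity only yields $\mathcal{E}(f)-\mathcal{E}(g)\leq\mathcal{E}(f-g)$, and $\mathcal{E}(1_{A\setminus K})$ need not be small even when $\mathcal{E}(1_A)=\mathcal{E}(1_K)$; take $\mathcal{E}=\max(E_{P_1},E_{P_2})$ with $P_1(K)=1$, $P_2(K)=0$, $P_2(A)=1$. So the step cannot be reduced to set capacities level by level. The correct tool --- and the one the paper invokes, following Section~3 of Choquet's article --- is the capacitability theorem for \emph{functional} capacities, applied to a Suslin scheme of functions in $C_b(\Omega)$ (or $usc_b(\Omega)$) whose nucleus is $X$; for Polish $\Omega$ the nuclei of such schemes are precisely the bounded upper semianalytic functions. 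Your hypotheses do make $\mathcal{E}$ a functional capacity (continuity from below on $\mathcal{L}(\Omega)$ plus continuity from above on $usc_b(\Omega)$, which you have already established), so the theorem applies; it is only your proposed derivation of it from the set version that breaks down.
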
	

See Choquet's original work \cite{choquet1959forme} for the theorem in a different form and e.g.~\cite{beiglbock2015complete,kellerer1984duality} as well as  \cite[Section 2]{bartl2017robust} for applications and the statement in precisely this form. 
For convenience, a sketch of the proof is given in Appendix \ref{sec:app.choquet}.

\vspace{0.5em}

The first goal of this article is to obtain a result of this type for conditional sublinear expectations.
To gain some feeling and find the right formulation, it might be helpful to construct those objects first.
To that end, let $\mathcal{G}\subset\mathcal{F}$ be a countably generated sub $\sigma$-field.
Given a set of probabilities $\mathcal{P}$, the first approach ``$\sup_{P\in\mathcal{P}} E_P[X|\mathcal{G}]$'' is (in general) not well-defined as $E_P[\cdot|\mathcal{G}]$ is defined only up to $P$-zero sets which are usually not the same over the class $\mathcal{P}$, and the uncountable supremum over $P\in\mathcal{P}$ may fail to be measurable.
Instead, a representation over kernels seems more feasible: 
denote by $P_\mathcal{G}$ the regular version of the conditional probability so that $E_P[X|P]=E_{P_\mathcal{G}}[X]$ almost surely and $P_\mathcal{G}(\omega)([\omega]_\mathcal{G})=1$ for almost all $\omega$, where $[\omega]_\mathcal{G}:=\bigcap \{ A : \omega\in A\in\mathcal{G}\}$ (note that if $\mathcal{G}=\sigma(\phi)$ for some Borel $\phi \colon\Omega\to S$, where $S$ is another Polish space, then $\mathcal{G}$ is countably generated and $[\omega]_\mathcal{G}=\{\eta\in\Omega : \phi(\eta)=\phi(\omega)\}$).
Then, if for each $\omega\in\Omega$ one is given a set of probabilities $\mathcal{P}_\mathcal{G}(\omega)$ on $\Omega$ such that $P([\omega]_\mathcal{G}=1)$ for all $P\in\mathcal{P}_\mathcal{G}(\omega)$, one can define 
\begin{align} 
\label{eq:intro.def.cond}
\mathcal{E}(\cdot|\mathcal{G}) \colon \mathcal{L}(\Omega) \to \mathbb{R}^\Omega,
\quad \mathcal{E}(X|\mathcal{G})(\omega) := \sup_{P\in\mathcal{P}_\mathcal{G}(\omega)}  E_P[X].
\end{align} 
The functional $\mathcal{E}(\cdot|\mathcal{G})$ is pointwise sublinear and increasing, and $\mathcal{E}(X|\mathcal{G})=X$ for every $X\in\mathcal{L}(\mathcal{G})$, where $\mathcal{L}(\mathcal{G})$ is the set of all bounded $\mathcal{G}$-upper semianalytic functions.
Further, under the assumption that 
\[\mathop{\mathrm{Graph}}(\mathcal{P}_\mathcal{G}):=\{ (\omega,P)\in \Omega\times\mathfrak{P}(\Omega) : P\in\mathcal{P}_\mathcal{G}(\omega)\} \text{ is an } \mathcal{G}\otimes\mathcal{B}(\mathfrak{P}(\Omega))\text{-analytic set,}\] the powerful theory of Luzin and Suslin applies and guarantees that $\mathcal{E}(\cdot|\mathcal{G})$ is in fact a mapping from $\mathcal{L}(\mathcal{F})$ to $\mathcal{L}(\mathcal{G})$.
This pointwise construction of conditional sublinear expectations (i.e.~sublinear increasing functionals $\mathcal{E}(\cdot|\mathcal{G})\colon\mathcal{L}(\mathcal{F})\to\mathcal{L}(\mathcal{G})$ satisfying $\mathcal{E}(X|\mathcal{G})=X$ for all $X\in\mathcal{L}(\mathcal{G})$) has been used several times over the last decades, see for instance \cite{bertsekas1978stochastic,bouchard2015arbitrage,dellacherie2011probabilities,karoui2013capacities,nutz2013constructing}.
The first main result of this article is the reverse question, that is, whether an arbitrary mapping $\mathcal{E}(\cdot|\mathcal{G})\colon\mathcal{L}(\mathcal{F})\to\mathcal{L}(\mathcal{G})$ has an associated family of probabilities $(\mathcal{P}_\mathcal{G}(\omega))_\omega$ such that \eqref{eq:intro.def.cond} holds, and if $\mathop{\mathrm{Graph}}(\mathcal{P}_\mathcal{G})$ needs to be analytic, which was not studied so far.

\begin{theorem}
\label{thm:cond.rep.sublin}
	Let $\mathcal{E}(\cdot|\mathcal{G})\colon \mathcal{L}(\mathcal{F})\to \mathcal{L}(\mathcal{G})$	be a conditional nonlinear expectation.
	Then there exists a set-valued mapping $\mathcal{P}_\mathcal{G}\colon\Omega\to\mathfrak{P}(\Omega)$ with nonempty, convex, and weakly compact values such that $\mathop{\mathrm{Graph}}(\mathcal{P}_\mathcal{G})$ is an $\mathcal{G}\otimes\mathcal{B}(\mathfrak{P}(\Omega))$-analytic set and $P\in\mathcal{P}_\mathcal{G}(\omega)$ implies $P([\omega]_\mathcal{G})=1$	for which
	\begin{align}
	\label{eq:intro.choquet.cond}
	\mathcal{E}(X|\mathcal{G})(\omega)=\sup_{P\in\mathcal{P}_\mathcal{G}(\omega)}  E_P[X]
	\quad\text{for every }\omega\in\Omega\text{ and }X\in\mathcal{L}(\mathcal{F}) 
	\end{align}
	if and only if 
	$\mathcal{E}(X_n|\mathcal{G})\downarrow\mathcal{E}(X|\mathcal{G})$ pointwise for every sequence $X_n\in C_b(\Omega)$ with 
	$X_n\downarrow X\in \mathcal{L}(\mathcal{F})$ pointwise and
	$\mathcal{E}(X_n|\mathcal{G})\uparrow \mathcal{E}(X|\mathcal{G})$ pointwise for every sequence 
	$X_n\in\mathcal{L}(\mathcal{F})$ with $X_n\uparrow X\in\mathcal{L}(\mathcal{F})$ pointwise.
\end{theorem}

In case of linear conditional expectations, next to monotonicity and preservation of $\mathcal{G}$-measurable functions, the most important property is the tower property $E[\cdot]=E[E[\cdot|\mathcal{G}]]$.
While the first two properties are part of the definition of conditional sublinear expectations, the tower property $\mathcal{E}(\cdot)=\mathcal{E}(\mathcal{E}(\cdot|\mathcal{G}))$  (also called dynamic programming principle) does not hold in general.
However, it is possible to characterize when it does hold true on the level of representing probabilities.
A more general version of the proceeding theorem is formulated in Theorem \ref{thm:char.tower}.

\begin{theorem}
\label{thm:char.tower.sublin}
	Assume that $\mathcal{E}(\cdot)$ and $\mathcal{E}(\cdot |\mathcal{G})$ satisfy the assumptions of the previous theorems and therefore have the representations \eqref{eq:intro.choquet} and \eqref{eq:intro.choquet.cond}, respectively. 
	If further $\mathcal{E}(X|\mathcal{G})$ is $\mathcal{G}$-measurable for every $X\in C_b(\Omega)$ and $\mathcal{G}=\sigma(\phi)$ for some continuous $\phi\colon\Omega\to S$ where $S$ is another Polish space, then
	$\mathcal{E}(\cdot)=\mathcal{E}(\mathcal{E}(\cdot|\mathcal{G}))$ if and only if
	\[ \mathcal{P}
	=\mathcal{P}\otimes \mathcal{P}_{\mathcal{G}}
	:=\{ Q\otimes R :Q\in\mathcal{P} \text{ and } R(\cdot)\in \mathcal{P}_{\mathcal{G}}(\cdot) 
	\,\,Q\text{-almost surly}\},\]
	where $R\colon \Omega\to \mathfrak{P}(\Omega)$ is $\mathcal{G}$-(universally) measurable.
\end{theorem}

Here $Q\otimes R\in\mathfrak{P}(\Omega)$ is defined by $E_{Q\otimes R}[X]=E_{Q(d\omega)}[E_{R(\omega)}[X]]$ for $X\in\mathcal{L}(\mathcal{F})$.
Note that it follows rather directly from results on analytic sets
that $\mathcal{E}(\cdot)=\mathcal{E}(\mathcal{E}(\cdot|\mathcal{G}))$  whenever $\mathcal{P}=\mathcal{P}\otimes \mathcal{P}_\mathcal{G}$.
The actual statement of the theorem is the reverse implication, which has the following application.

\begin{theorem}
\label{thm:compactness.of.product}
	Assume that $\mathcal{G}=\sigma(\phi)$ for some continuous $\phi\colon\Omega\to S$ where $S$ is another Polish space.
	Let $\mathcal{P}_\mathcal{G}\colon \Omega\rightsquigarrow\mathfrak{P}(\Omega)$ be a set-valued mapping with convex values such that $P([\omega]_\mathcal{G})=1$ for $P\in\mathcal{P}_\mathcal{G}(\omega)$ and $\omega\in\Omega$, and $\omega\mapsto \sup_{P\in\mathcal{P}_\mathcal{G}(\omega)} E_P[X]$ is $\mathcal{G}$-measurable for every $X\in C_b(\Omega)$.
	Then
	\[\mathcal{P}\otimes\mathcal{P}_\mathcal{G}\subset\mathfrak{P}(\Omega)
	\text{ is compact for every compact convex set }\mathcal{P}\subset\mathfrak{P}(\Omega)\]
	if and only if 
	$\mathcal{P}_\mathcal{G}$ has compact values and $\omega\mapsto \max_{P\in\mathcal{P}_\mathcal{G}(\omega)} E_P[X]$ is upper semicontinuous for every $X\in C_b(\Omega)$.
\end{theorem}

An application of this theorem to robust mathematical finance is given in Section \ref{sec:compactness.kernel}.
It is further characterized in which cases a nonlinear version of Fubini's theorem  (on the interchanging of the order of integration) holds true in Proposition \ref{prop:fubini}.
Examples to dynamic risk measures in the presence of Knightian uncertainty in discrete time as well as the controlled Brownian motion are studied in Example \ref{ex:avar.robust} and Example \ref{ex:G.brownian}, respectively.

Finally, notice that the choice of upper semianalytic functions as opposed to canonical ones as Borel- or universally measurable functions is important (in fact, fundamental):
As Theorem \ref{thm:cond.rep.sublin} should be an extension of Choquet's theorem,  it should at least include the case where $\mathcal{P}_\mathcal{G}(\omega)$ in \eqref{eq:intro.choquet.cond} depends as little as possible on $\omega\in\Omega$.	
However, already then one can not expect $\mathcal{E}(X|\mathcal{G})$ to be $\mathcal{G}$-measurable for $\mathcal{F}$-measurable $X$.
As a matter of fact it is not possible (in general) to work with vector spaces containing all bounded $\mathcal{F}$ or $\mathcal{G}$-measurable functions.
This is discussed in detail in Remark \ref{rem:why.usa}.

\vspace{0.5em}

The systematic treatment of nonlinear expectations with domain $C_b(\Omega)$ (and its completion with respect to a seminorm induced by a capacity) started in \cite{denis2011function,peng2010nonlinear}, in particular with connections to the nonlinear Brownian motion introduced by Peng \cite{peng2008multi}.
The extension of the latter to measurable (upper semianalytic) functions was carried out in \cite{nutz2013constructing} and generalized to nonlinear L{\'e}vy-processes in \cite{neufeld2017nonlinear}.
In the framework of robust mathematical finance (in discrete time), conditional nonlinear expectations were introduced in the seminal work of Bouchard and Nutz \cite{bouchard2015arbitrage} and successfully applied several times, see Section \ref{sec:sub.risk} for references. 
Further applications can be found e.g~in the context of (non-exponential) large deviations \cite{eckstein2017extended,lacker2016non} where the tower property (therein refereed to as tensorization) plays a crucial role, or in the context of fully nonlinear PDE's where the tower property is the flow/semigroup  property, see \cite{denk2017semigroup} and references therein.
If $\mathcal{L}(\mathcal{F})$ is replaced by the quotient space $L^\infty(\Omega,P^\ast)$ with respect to some reference measure $P^\ast$, nonlinear expectations and their dual representation were already studied in detail
due to the relation to risk measures, see e.g.~\cite{delbaen2002coherent,follmer2011stochastic}.
In a similar manner, \cite{cohen2012quasi} works in a setting where essential suprema are assumed to exist; see \cite{maggis2016fatou} for the characterization of this assumption.

\vspace{0.5em}

The rest of this article is organized as follows: 
All results and proofs for conditional nonlinear expectations (in a more general convex instead of sublinear form) are presented in Section \ref{sec:main.results}.
Applications, the proof of Theorem \ref{thm:compactness.of.product}, and examples are given in Section \ref{sec:example.extension}.
A summary together with basic facts about analytic sets is given in Appendix \ref{sec:appendix.analytic}, and Appendix \ref{sec:app.choquet} contains a sketch of Choquet's theorem as stated in the introduction.

\section{Main results}
\label{sec:main.results}

First fix notation.
Recall that $\Omega$ is a Polish space with Borel $\sigma$-field $\mathcal{F}$ and countably generated sub $\sigma$-field $\mathcal{G}$.
For any $\sigma$-field $\mathcal{H}$ on $\Omega$, the set $\mathcal{L}(\mathcal{H})$ denotes the set of all bounded $\mathcal{H}$-upper semianalytic functions from $\Omega$ to $\mathbb{R}$.
A short summary of analytic (and universally measurable) sets and functions is given in Appendix \ref{sec:appendix.analytic}.
Write $C_b(\Omega)$ and $usc_b(\Omega)$ for the set of bounded functions from $\Omega$ to $\mathbb{R}$ which are continuous and upper semicontinuous, respectively.
For any function $X\colon\Omega\to[-\infty,+\infty]$, define it's maximum norm $\|X\|_\infty:=\sup_{\omega\in\Omega} |X(\omega)|$.
The set of $\sigma$-additive Borel probability measures on $\Omega$ is denoted by $\mathfrak{P}(\Omega)$ and endowed with the weak topology $\sigma(\mathfrak{P}(\Omega),C_b(\Omega))$, i.e.~the coarsest topology making the mappings $P\mapsto E_P[X]$ continuous for every $X\in C_b(\Omega)$.
This renders $\mathfrak{P}(\Omega)$ a Polish space.
For $P\in\mathfrak{P}(\Omega)$, denote by $P_\mathcal{G}$ the regular version of the conditional probability.
For $P\in\mathfrak{P}(\Omega)$ and $P$-measurable $R\colon\Omega\to\mathfrak{P}(\Omega)$ write $Q:=P\otimes R$ for the probability $Q\in\mathfrak{P}(\Omega)$ defined by $E_Q[X]=E_{P(d\omega)}[E_{R(\omega)}[X]]$; in particular $P=P\otimes P_\mathcal{G}$.
For any Polish space $S$, denote by $\mathcal{B}(S)$ the Borel $\sigma$-field.
Product spaces are endowed with the  product topology
and when functions are in consideration, (in-) equalities and convergence
is to be understood in a pointwise sense, unless stated otherwise.

\begin{definition}
	\label{def:expectation}
	A mapping $\mathcal{E}(\cdot|\mathcal{G})\colon \mathcal{L}(\mathcal{F})\to\mathcal{L}(\mathcal{G})$ is called conditional nonlinear expectation, if for all $X,Y\in\mathcal{L}(\mathcal{F})$ one has
	\begin{itemize}
	\addtolength{\itemindent}{-2em}
	\item $\mathcal{E}(X|\mathcal{G})\leq \mathcal{E}(Y|\mathcal{G})$ whenever $X\leq Y$,
	\item $\mathcal{E}(X|\mathcal{G})=X$ whenever $X\in\mathcal{L}(\mathcal{G})$,
	\item $\mathcal{E}(\lambda X+(1-\lambda)Y|\mathcal{G})\leq \lambda\mathcal{E}(X|\mathcal{G})+(1-\lambda)\mathcal{E}(Y|\mathcal{G})$
	for all $\lambda\in[0,1]$.
	\end{itemize}
	Further $\mathcal{E}(\cdot|\mathcal{G})$ is said to be a conditional sublinear expectation if in addition
	\begin{itemize}
	\addtolength{\itemindent}{-2em}
	\item $\mathcal{E}(\lambda X|\mathcal{G})=\lambda\mathcal{E}(X|\mathcal{G})$ 
	for all $\lambda\in[0,+\infty)$.
	\end{itemize}
\end{definition}

\subsection{Continuity and representation}

The goal of this section is to establish a conditional version of Choquet's theorem for nonlinear expectations. 
The sublinear case, stated in Theorem \ref{thm:cond.rep.sublin}, will be a special case; its proof is given at the end of this section.

\begin{definition}
	A conditional nonlinear expectation $\mathcal{E}(\cdot|\mathcal{G})$ is said to be continuous from above (on $C_b(\Omega)$) if
	\begin{itemize}
	\item[(A)]
	$\mathcal{E}(X_n|\mathcal{G})\downarrow\mathcal{E}(X|\mathcal{G})$ for all sequences $X_n\in C_b(\Omega)$ with $X_n\downarrow X\in \mathcal{L}(\mathcal{F})$.
	\end{itemize}
	Similarly, $\mathcal{E}(\cdot|\mathcal{G})$ is said to be continuous from below (on $\mathcal{L}(\Omega)$) if
	\begin{itemize}
	\item[(B)] $\mathcal{E}(X_n|\mathcal{G})\uparrow \mathcal{E}(X|\mathcal{G})$ for all sequences $X_n\in\mathcal{L}(\mathcal{F})$	with $X_n\uparrow X\in\mathcal{L}(\mathcal{F})$.
	\end{itemize}
\end{definition}

Before stating the main result, some facts about the continuity properties are stated which are worth mentioning.

\begin{remark}
\label{rem:cont.above.usc}
	If $\mathcal{E}(\cdot|\mathcal{G})$ is a conditional nonlinear expectation which satisfies (A) and (B), then $\mathcal{E}(X_n|\mathcal{G})\downarrow \mathcal{E}(X|\mathcal{G})$ for every sequence $X_n\in usc_b(\Omega)$ with $X_n\downarrow X\in\mathcal{L}(\mathcal{F})$.
	This is shown in the proof of Theorem \ref{thm:cond.rep}.
\end{remark}

\begin{remark}
	\label{rem:dual.char.A}
	A conditional nonlinear expectation $\mathcal{E}(\cdot|\mathcal{G})$ satisfies (A) if and only if it satisfies both
	\begin{itemize}
	\item[(A')] $\mathcal{E}(X_n|\mathcal{G})\downarrow 0$ for all sequences $X_n\in C_b(\Omega)$ with $X_n\downarrow 0$,
	\item[(A'')] $\sup_{X\in C_b(\Omega)} (E_P[X] -\mathcal{E}(X|\mathcal{G}))=\sup_{X\in usc_b(\Omega)} (E_P[X] -\mathcal{E}(X|\mathcal{G}))$
	for every probability $P\in\mathfrak{P}(\Omega)$.
	\end{itemize} 
\end{remark}
\begin{proof}
	Since $\Omega$ is a Polish space, every upper semicontinuous function can be written as the decreasing limit of a sequence of continuous functions. 
	Therefore it is clear that (A) implies (A') and (A'').
	The other direction will be shown within the proof of Theorem \ref{thm:cond.rep}. 
\end{proof}

\begin{remark}
	\label{rem:tightness}
	In the non-conditional case (i.e.~when $\mathcal{G}$ is trivial) condition (A') is equivalent to the well-known ``tightness" condition: 
	There exists a sequence of compact sets $K_n\subset\Omega$ such that $\mathcal{E}(m 1_{K_n^c})\downarrow0$ for every positive real number $m$.
	In the conditional case this no longer holds true.
\end{remark}
\begin{proof}
	Here it is only shown that in general there is no sequence of compact sets $K_n\subset\Omega$ such that $\mathcal{E}(1_{K_n^c}|\mathcal{G})\downarrow0$.
	The equivalence of (A') and the tightness condition in the non-conditional case is shown within the proof of Theorem \ref{thm:cond.rep}.
	Let $\Omega=\mathbb{R}^\mathbb{N}\times\mathbb{R}^\mathbb{N}$ with canonical elements $\omega=(\omega_1,\omega_2)$ and $\mathcal{G}$ generated by the first coordinate, that is, $\mathcal{G}=\sigma(\omega\mapsto \omega_1)$.
	Define 
	\[\mathcal{E}(X|\mathcal{G})(\omega):=X(\omega_1,\omega_1) \quad\text{for }\omega=(\omega_1,\omega_2)\in \Omega
	\text{ and } X\in\mathcal{L}(\mathcal{F}).\]
	By \cite[Lemma 7.30]{bertsekas1978stochastic}, one has $\mathcal{E}(X|\mathcal{G})\in\mathcal{L}(\mathcal{G})$ for every $X\in\mathcal{L}(\mathcal{F})$.
	Therefore $\mathcal{E}(\cdot|\mathcal{G})$ is a conditional sublinear expectation which clearly satisfies (A) and (B).
	Assume that there exists a sequence of compact sets $K_n\subset\Omega$ such that $\mathcal{E}(1_{K_n^c}|\mathcal{G})\downarrow 0$ pointwise, where one may assume without loss of generality that $K_n=C_n\times C_n$ for $C_n\subset \mathbb{R}^\mathbb{N}$ compact.
	Since $\mathcal{E}(1_{K_n^c}|\mathcal{G})(\omega)=1_{C_n^c}(\omega_1)$ it follows that $\mathbb{R}^\mathbb{N}=\bigcup\{ C_n : n\in\mathbb{N}\}$.
	Thus, by the Baire category theorem, there exists some $n$ such that $C_n$ has nonempty interior.
	However, every compact subset of $\mathbb{R}^\mathbb{N}$ clearly has empty interior, and thus such a sequence $K_n$ cannot exist.
\end{proof}	

\begin{theorem}
	\label{thm:cond.rep}
	Let $\mathcal{E}(\cdot|\mathcal{G})$ be a conditional nonlinear expectation which satisfies (A) and (B).
	Then there exists a $\mathcal{G}\otimes\mathcal{B}(\mathfrak{P}(\Omega))$-lower semianalytic function $\alpha_\mathcal{G}\colon \Omega \times\mathfrak{P}(\Omega)\to[0,+\infty]$ such that for every $\omega\in\Omega$ it holds that $\alpha_\mathcal{G}(\omega,\cdot)$ is convex, $\inf_{P} \alpha_\mathcal{G}(\omega,\cdot)=0$, for every $c\in\mathbb{R}$ the set $\{\alpha_\mathcal{G}(\omega,\cdot)\leq c\}$ is compact, $\alpha_\mathcal{G}(\omega,P)<+\infty$ implies $P([\omega]_\mathcal{G})=1$, and one has
	\begin{align}
	\label{eq:cond.rep}
	\mathcal{E}(X|\mathcal{G})(\omega)=\sup_{P\in\mathfrak{P}(\Omega)} ( E_P[X] - \alpha_\mathcal{G}(\omega,P) ) 
	\end{align}
	for every $\omega\in\Omega$ and $X\in\mathcal{L}(\mathcal{F})$.
		
	Conversely, if $\alpha_\mathcal{G}\colon \Omega\times\mathfrak{P}(\Omega)\to[0,+\infty]$ is a $\mathcal{G}\otimes\mathcal{B}(\mathfrak{P}(\Omega))$-lower semianalytic function such that
	$\inf_P \alpha_\mathcal{G}(\omega,P)= 0$ and $\alpha_\mathcal{G}(\omega,P)<+\infty$ implies $P([\omega]_\mathcal{G})=1$ for every $\omega\in\Omega$, then $\mathcal{E}(\cdot|\mathcal{G})$ defined by \eqref{eq:cond.rep} is a conditional nonlinear expectation satisfying (B). 
	If in addition $\{\alpha_\mathcal{G}(\omega,\cdot)\leq c\}$ is compact and $\alpha_\mathcal{G}(\omega,\cdot)$ is convex for every $\omega\in\Omega$ and $c\in\mathbb{R}$, then $\mathcal{E}(\cdot|\mathcal{G})$ also satisfies (A).
\end{theorem}

\begin{remark}
\label{rem:unique.penalty}
	For a conditional nonlinear expectation $\mathcal{E}(\cdot|\mathcal{G})$ satisfying (A) and (B) there are in general many functions $\alpha_\mathcal{G}\colon \Omega\times\mathfrak{P}(\Omega)\to[0,+\infty]$ such that \eqref{eq:cond.rep} holds, and not every $\alpha_\mathcal{G}$ needs to be lower semianalytic.
	However, if $\alpha_\mathcal{G}(\omega,\cdot)$ is required to be convex and lower semicontinuous (which particularly is satisfied if all sublevel sets are compact) for every $\omega\in \Omega$, then $\alpha_\mathcal{G}$ is unique, $\mathcal{G}\otimes\mathcal{B}(\mathfrak{P}(\Omega))$-lower semianalytic, and in fact given by
	\begin{align}
	\label{eq:def.alpha}
	\alpha_\mathcal{G}(\omega,P)=\sup_{X\in C_b(\Omega)} ( E_P[X]-\mathcal{E}(X|\mathcal{G})(\omega) )
	\end{align}
	for $\omega\in\Omega$ and $P\in\mathfrak{P}(\Omega)$.
\end{remark}
\begin{proof}
	To show that $\alpha_\mathcal{G}$ needs not to be unique (or $\mathcal{G}\otimes\mathcal{B}(\mathfrak{P}(\Omega))$-lower semianalytic), let $\Omega=[0,1]^2$ with $\mathcal{G}$ generated by the first coordinate.
	Define $\alpha_\mathcal{G}(\omega,P):=0$ for $\omega\in \Omega$ and $P\in\mathfrak{P}(\Omega)$ with $P([\omega]_\mathcal{G})=1$ and $+\infty$ else.
	By Theorem \ref{thm:cond.rep}
	\[\mathcal{E}(X|\mathcal{G})(\omega):=\sup_{P\in\mathfrak{P}(\Omega)}(E_P[X]-\alpha_\mathcal{G}(\omega,P) )
	=\sup_{x\in[0,1]} X(\omega_1,x) \]
	for $\omega=(\omega_1,\omega_2)\in\Omega$ defines a conditional nonlinear expectation $\mathcal{E}(\cdot|\mathcal{G})\colon\mathcal{L}(\mathcal{F})\to\mathcal{L}(\mathcal{G})$ which satisfies (A) and (B).
	Now let $R\colon [0,1]\to\mathfrak{\Omega}$ be a non-universally measurable function such that $R(x)(\{x\}\times[0,1])=1$ but $R(x)\notin D(x)$ for all $x\in[0,1]$, where $D(x)$ denotes the set of all probabilities $P$ such that $P(\{x\}\times \{y\})=1$  for some $y\in[0,1]$.
	Then, for $\tilde{\alpha}_\mathcal{G}(\omega,P)=+\infty 1_{D(\omega_1)^c\cap \{R(\omega_1)\}^c}(P)$, one has $\mathcal{E}(X|\mathcal{G})(\omega)=\sup_{P\in\mathfrak{P}(\Omega)}(E_P[X]-\tilde{\alpha}_\mathcal{G}(\omega,P))$.
	However, $\tilde{\alpha}_\mathcal{G}$ is not $\mathcal{G}$-lower semianalytic as $\{\alpha_\mathcal{G}\leq 0\}$ is the disjoint union of $\{(\omega,P) : P\in D(\omega_1)\}$ and $\{(\omega,R(\omega_1)):\omega\in \Omega\}$.

	As for the second part, notice that \eqref{eq:def.alpha} follows from the Fenchel-Moreau theorem. 
	It is shown within the proof of Theorem \ref{thm:cond.rep} that under conditions (A) and (B), the function $\alpha_\mathcal{G}$ defined by \eqref{eq:def.alpha} is $\mathcal{G}\otimes\mathcal{B}(\mathfrak{P}(\Omega))$-lower semianalytic.
\end{proof}

\begin{proof}[\text{\bf Proof of Theorem \ref{thm:cond.rep}}]
	Let $\mathcal{E}(\cdot|\mathcal{G})$ be a conditional nonlinear expectation which satisfies (A) and (B), and fix some $\omega\in \Omega$.
	Then the functional $\mathcal{E}(\cdot|\mathcal{G})(\omega)$ from $\mathcal{L}(\mathcal{F})$ to $\mathbb{R}$ is a nonlinear expectation which satisfies all assumptions of Choquet's theorem (more precisely, the convex version as stated in \cite[Section 2]{bartl2017robust}). 
	Thus
	\begin{align}
	\label{eq:rep.choquet.in.proof}
	\mathcal{E}(X|\mathcal{G})(\omega)=\sup_{P\in \mathfrak{P}(\Omega)} 
	(E_{P}[X]-\alpha_\mathcal{G}(\omega,P))
	\quad\text{for all }X\in \mathcal{L}(\mathcal{F}),
	\end{align}
	where $\alpha_\mathcal{G}(\omega,P):=\sup_{X\in C_b(\Omega)} (E_{P}[X]-\mathcal{E}(X|\mathcal{G})(\omega))$.
	By \cite[Theorem 2.2]{bartl2017robust} the set $\{\alpha_\mathcal{G}(\omega,\cdot)\leq c\}$ is compact for every $c\in\mathbb{R}$ and, as a supremum over affine functionals, $\alpha_\mathcal{G}(\omega,\cdot)$ is convex.
	To show that $\alpha_\mathcal{G}(\omega,P)<+\infty$ implies that $P([\omega]_\mathcal{G})=1$, let $\lambda\in\mathbb{R}$ be arbitrary and define $Y:=\lambda 1_{[\omega]_\mathcal{G}}\in\mathcal{L}(\mathcal{G})$.
	As \eqref{eq:rep.choquet.in.proof} holds for every $X\in\mathcal{L}(\mathcal{F})$ and $\mathcal{E}(Y|\mathcal{G})(\omega)=Y(\omega)=\lambda$ by assumption, one obtains 
	$\alpha_\mathcal{G}(\omega,P)\geq E_{P}[Y]-\mathcal{E}(Y|\mathcal{G})(\omega)=\lambda (P([\omega]_\mathcal{G})-1)$; thus $P([\omega]_\mathcal{G})=1$.		
	The only thing which now remains open is to show that $\alpha_\mathcal{G}$ is $\mathcal{G}\otimes\mathcal{B}(\mathfrak{P}(\Omega))$-lower semianalytic.
	As $\Omega$ is a Polish space, there exists a metric $d'$ on $\Omega$ inducing the original topology such that the space $uc_b(\Omega,d')$ becomes separable \cite[Lemma 3.1.4]{stroock2010probability}.
	Here $uc_b(\Omega,d')$ denotes the set of all bounded functions from $\Omega$ to $\mathbb{R}$ which are uniformly continuous with respect to $d'$, and this space is endowed with the maximum norm.
	Let $D$ be a countable dense subset.
	As $-\|X\|_\infty\leq\mathcal{E}(X|\mathcal{G})\leq\|X\|_\infty$ for every $X\in\mathcal{L}(\mathcal{F})$	by monotonicity of $\mathcal{E}(\cdot|\mathcal{G})$, it follows for all	$c\in\mathbb{R}$, $\omega\in\Omega$, and $X\in\mathcal{L}(\mathcal{F})$	that
	\begin{align}
	\label{eq:Et.restriction.Lambdac} 
	\mathcal{E}(X|\mathcal{G})(\omega)= \sup_{P\in\Lambda_{2c}(\omega)}	(E_P[X]-\alpha_\mathcal{G}(\omega,P) )	
	\quad\text{if } \|X\|_\infty< c,
	\end{align}
	where $\Lambda_{2c}(\omega):=\{\alpha_\mathcal{G}(\omega,\cdot)\leq 2c\}$.
	Fix $\omega\in \Omega$, $P\in\mathfrak{P}(\Omega)$, $X\in C_b(\Omega)$, $\varepsilon\in (0,1)$, and let $c:=\|X\|_\infty+1$.
	As the set $\Lambda_{2c}(\omega)\cup\{P\}$ is compact, Prokhorov's theorem yields the existence of a compact set $K\subset \Omega$ such that 
	\[ P(K^c)\leq \frac{\varepsilon}{c}\quad\text{and}\quad 
	\sup_{Q\in\Delta_{2c}(\omega)} Q(K^c)\leq \frac{\varepsilon}{c}.\]
	Moreover, $X1_K\in uc_b(K,d')$, and by a version of Tietze's extension theorem \cite[Theorem 3]{katvetov1951real} there
	exists a uniformly continuous function $Y\in uc_b(\Omega,d')$ such that $Y=X$ on $K$, where one can assume without loss of generality that $\|Y\|_\infty\leq \|X\|_\infty$.
	Moreover, as $D\subset uc_b(\Omega,d')$ is dense, there exists $Y'\in D$ such that $\|Y-Y'\|_\infty\leq \varepsilon$.
	Then in particular $\|Y'\|_\infty< c$, so that for every $Q\in\Lambda_{2c}(\omega)$, it holds
	\[E_Q[Y']
	\leq	E_Q[Y' 1_K] +\varepsilon
	\leq E_Q[X 1_K]+2\varepsilon
	= E_Q[X] + 3\varepsilon.\]
	In combination with \eqref{eq:Et.restriction.Lambdac}, this implies
	$\mathcal{E}(Y'|\mathcal{G})(\omega)
	\leq \mathcal{E}(X|\mathcal{G})(\omega)+3\varepsilon$.
	Further, changing the roles of $X$ and $Y'$ and replacing $Q$ by $P$, one gets that $E_P[X]\leq E_P[Y']+3\varepsilon$ and therefore
	\begin{align*}
	E_P[X]-\mathcal{E}(X|\mathcal{G})(\omega)
	&\leq E_P[Y']-\mathcal{E}(Y'|\mathcal{G})(\omega) + 6\varepsilon\\
	&\leq \sup_{Z\in D} ( E_P[Z]-\mathcal{E}(Z|\mathcal{G})(\omega) ) + 6\varepsilon.
	\end{align*}
	As $D$ is a subset of $C_b(\Omega)$ and $X\in C_b(\Omega)$, $\varepsilon\in (0,1)$ were arbitrary, this yields
	\[	\alpha_\mathcal{G}(\omega,P)
	=\sup_{Z\in D} ( E_P[Z]-\mathcal{E}(Z|\mathcal{G})(\omega) ).\] 
	Finally, for every $Z\in D$, the function $(\omega,P)\mapsto E_P[Z] -\mathcal{E}(Z|\mathcal{G})(\omega)$
	is $\mathcal{G}\otimes\mathcal{B}(\mathfrak{P}(\Omega))$-lower semianalytic, as the sum of such functions.
	As the countable supremum, $\alpha_\mathcal{G}$ inherits this property.
	
	To prove the second statement, let  $\alpha_\mathcal{G}\colon \Omega\times\mathfrak{P}(\Omega)\to[0,+\infty]$ be a given $\mathcal{G}\otimes\mathcal{B}(\mathfrak{P}(\Omega))$-lower semianalytic function for which $\alpha_\mathcal{G}(\omega,P)<+\infty$ implies $P([\omega]_\mathcal{G})=1$ and $\inf_P \alpha_\mathcal{G}(\omega,P)= 0$ for every $\omega\in\Omega$.
	Define 
	\[\mathcal{E}(X|\mathcal{G})(\omega):=\sup_{P\in\mathfrak{P}(\Omega)} ( E_P[X]-\alpha_\mathcal{G}(\omega,P) )\]
	for $\omega\in\Omega$ and $X\in\mathcal{L}(\mathcal{F})$.
	For fixed $X\in\mathcal{L}(\mathcal{F})$, the mapping
	\[ \Omega\times\mathfrak{P}(\Omega)\to[-\infty,+\infty),\quad
	(\omega,P)\mapsto E_P[X]-\alpha_\mathcal{G}(\omega,P)  \]
	is $\mathcal{G}\otimes\mathcal{B}(\mathfrak{P}(\Omega))$-upper semianalytic \cite[Proposition 7.48]{bertsekas1978stochastic}.
	Therefore, it follows from \cite[Corollary 6.10.10]{bogachev2007measure} that $\omega\mapsto \mathcal{E}(X|\mathcal{G})(\omega)$ is $\mathcal{G}$-upper semianalytic.
	Further, as $\inf_P \alpha_\mathcal{G}(\omega,P)=0$, one has 
	\[	-\|X\|_\infty \leq \mathcal{E}(X|\mathcal{G})(\omega)\leq \|X\|_\infty\]
	so that $\mathcal{E}(X|\mathcal{G})\in\mathcal{L}(\mathcal{G})$.	
	For $X\in\mathcal{L}(\mathcal{G})$ and every $P$ with $\alpha_\mathcal{G}(\omega,P)<+\infty$ one has $E_P[X]=X(\omega)$, therefore $\mathcal{E}(X|\mathcal{G})=X$.
	The other properties needed for $\mathcal{E}(\cdot|\mathcal{G})$ to be a conditional nonlinear expectation are immediate. 
	Condition (B) follows by interchanging two suprema and the monotone convergence theorem (applied to each $E_P[\cdot]$).
	
	Assume in addition that $\alpha_\mathcal{G}(\omega,\cdot)$ is convex and $\Lambda_c(\omega):=\{\alpha_\mathcal{G}(\omega,\cdot)\leq c\}$ is compact for every $c\in\mathbb{R}$ and $\omega\in\Omega$.
	Fix some $\omega\in\Omega$ and a sequence $X_n\in usc_b(\Omega)$ which decreases pointwise to some $X\in \mathcal{L}(\mathcal{F})$.
	Then it follows as in \eqref{eq:Et.restriction.Lambdac} that
	\begin{align}
	\label{eq:Et.restriction.Lambdac.2}
		\mathcal{E}(Y|\mathcal{G})(\omega)
		=\max_{P\in\Lambda_{2c}(\omega)} (E_P[Y]-\alpha_\mathcal{G}(\omega,P))
		\quad \text{for } Y\in\{X,X_1,X_2,\dots\},
	\end{align}
	where
	$c:=\max\{\|X_1\|_\infty,\|X\|_\infty\}+1$.
	As $\Lambda_{2m}(\omega)$ is compact and convex, $X_n$ is a decreasing sequence, and 
	\[\mathfrak{P}(\Omega)\ni P\mapsto E_P[X_n]-\alpha_\mathcal{G}(\omega,P)\]
	is convex and upper semicontinuous for every $n$ (approximate $X_n$ from above by continuous functions), it follows from \eqref{eq:Et.restriction.Lambdac.2}, a minimax theorem \cite[Theorem 2]{fan1953minimax}, and the monotone convergence theorem that
	\[ \inf_{n\in\mathbb{N}}  \mathcal{E}(X_n|\mathcal{G})(\omega)
	=\max_{P\in\Lambda_{2m}(\omega)} \inf_{n\in\mathbb{N}} (E_P[X_n]-\alpha_\mathcal{G}(\omega,P))
	=\mathcal{E}(X|\mathcal{G})(\omega).\]
	Thus condition (A), the missing part of Remark \ref{rem:dual.char.A}, and Remark \ref{rem:cont.above.usc} are proven.
\end{proof}

\begin{proof}[\text{\bf Proof of Theorem \ref{thm:cond.rep.sublin}}]
	If $\mathcal{E}(\cdot|\mathcal{G})$ satisfies (A) and (B) and is sublinear, then Theorem \ref{thm:cond.rep} guarantees that $\mathcal{E}(\cdot|\mathcal{G})$ has the representation \eqref{eq:cond.rep} where $\alpha_\mathcal{G}$ is defined by \eqref{eq:def.alpha} (see the proof of the theorem or Remark \ref{rem:unique.penalty}). 
	Now a scaling argument shows that $\alpha_\mathcal{G}$ only takes the values $0$ or $+\infty$.
	Indeed, one has $\alpha_\mathcal{G}(\omega,P)=\sup_{X\in C_b(\Omega)}(E_P[X]-\mathcal{E}(X|\mathcal{G})(\omega))\geq0$ by plugging in the function $X=0$.
	On the other hand, if $\alpha_\mathcal{G}(\omega,P)>0$, there exists $X\in C_b(\Omega)$ such that $E_P[X]-\mathcal{E}(X|\mathcal{G})(\omega)>0$, hence $\alpha_\mathcal{G}(\omega,P)\geq\sup_{\lambda\geq0} \lambda (E_P[X]-\mathcal{E}(X|\mathcal{G})(\omega))=+\infty$.
	Therefore $\mathcal{E}(X|\mathcal{G})(\omega)=\sup_{P\in\mathcal{P}_\mathcal{G}(\omega)} E_P[X]$ for $\mathcal{P}_\mathcal{G}(\omega):=\{ \alpha_\mathcal{G}(\omega,\cdot)=0\}$.
	Notice that $\mathop{\mathrm{Graph}}(\mathcal{P}_\mathcal{G})=\{\alpha_\mathcal{G}\leq 0\}$ is an $\mathcal{G}\otimes\mathcal{B}(\mathfrak{P}(\Omega))$-analytic set.
	Conversely, if $\mathcal{P}_\mathcal{G}$ is given, just set $\alpha_\mathcal{G}(\omega,P):=+\infty1_{\mathcal{P}_\mathcal{G}(\omega)^c}(P)$.
	Then, since $\{\alpha_\mathcal{G}\leq c\}$ is empty for $c<0$ and equals $\mathop{\mathrm{Graph}}(\mathcal{P}_\mathcal{G})$ otherwise, $\alpha_\mathcal{G}$ is $\mathcal{G}\otimes\mathcal{B}(\mathfrak{P}(\Omega))$-lower semianalytic.
\end{proof}

The following (somewhat auxiliary) result shows that given a nonlinear expectation $\mathcal{E}(\cdot)\colon\mathcal{L}(\mathcal{F})\to\mathbb{R}$ with representing set $\mathcal{P}$, it is possible to give meaning to $\sup_{P\in\mathcal{P}} E_P[\cdot|\mathcal{G}]$.
However, without some assumption on the tower property (as will be investigated in the next section), the resulting conditional nonlinear expectation will have little to do with $\mathcal{E}(\cdot)$.

\begin{remark}
\label{rem:disintegration}
	Assume that $\mathcal{G}=\sigma(\phi)$ for some Borel $\phi\colon\Omega\to S$ where $S$ is another Polish space.
	Then, by \cite[Proposition 7.27]{bertsekas1978stochastic} (which is formulated for product spaces but readily extends to the present setting), one can construct a regular version of the conditional probability such that 
	\[\mathop{\mathrm{Dis}}\colon\Omega\times\mathfrak{P}(\Omega)\to\Omega\times\mathfrak{P}(\Omega),\quad (\omega,P)\mapsto (\omega,P_\mathcal{G}(\omega))\]
	is $\mathcal{G}\otimes\mathcal{B}(\mathfrak{P}(\Omega))$-measurable.
	In particular, if $\mathcal{P}\subset\mathfrak{P}(\Omega)$ is an $\mathcal{B}(\mathfrak{P}(\Omega))$-analytic set and $\mathcal{P}_\mathcal{G}(\omega):=\{P_\mathcal{G}(\omega) : P\in\mathcal{P}\}$ for every $\omega\in\Omega$, then $\mathop{\mathrm{Graph}}(\mathcal{P}_\mathcal{G})=\mathop{\mathrm{Dis}}(\Omega\times \mathcal{P})$ is an $\mathcal{G}\otimes\mathcal{B}(\mathfrak{P}(\Omega))$-analytic set \cite[Corollary 6.10.10]{bogachev2007measure}.
	Therefore $\mathcal{E}(\cdot|\mathcal{G})$ defined through $\mathcal{E}(X|\mathcal{G})(\omega):=\sup_{P\in\mathcal{P}_\mathcal{G}(\omega)} E_P[X]$ maps $\mathcal{L}(\mathcal{F})$ to $\mathcal{L}(\mathcal{G})$.
	In fact, one can also show that $\mathcal{P}_\mathcal{G}'\colon\Omega\rightsquigarrow\mathfrak{P}(\Omega)$ defined as the closed convex hull of $\mathcal{P}_\mathcal{G}(\omega)$ for each $\omega\in\Omega$ has $\mathcal{G}\otimes\mathcal{B}(\mathfrak{P}(\Omega))$-analytic graph.
\end{remark}

\subsection{The tower property}

In contrast to the linear case, the tower property does not need to hold in general.
The goal of this section to characterize on the level of representing sets of probabilities, in which cases the tower property holds true.

\begin{lemma}
\label{lem:composition} 
	Let $\mathcal{E}(\cdot|\mathcal{G})\colon\mathcal{L}(\mathcal{F})\to\mathcal{L}(\mathcal{G})$ be a conditional nonlinear expectation and $\mathcal{E}'(\cdot)\colon\mathcal{L}(\mathcal{G})\to\mathbb{R}$ be a nonlinear expectation such that
	\begin{align*}
	\mathcal{E}'(X)&=\sup_{P\in\mathfrak{P}(\Omega)}( E_P[X]-\beta(P)) & &\text{for } X\in\mathcal{L}(\mathcal{G}), \\
	\mathcal{E}(X|\mathcal{G})(\omega)&=\sup_{P\in\mathfrak{P}(\Omega)}( E_P[X]-\gamma_\mathcal{G}(\omega,P))& &\text{for } X\in\mathcal{L}(\mathcal{F}) 
	\end{align*}
	for some $\beta\colon\mathfrak{P}(\Omega)\to[0,+\infty]$ such that $\beta(P)=\beta(Q)$ if $P=Q$ on $\mathcal{G}$ and $\mathcal{G}\otimes\mathcal{B}(\mathfrak{P}(\Omega))$-lower semianalytic $\gamma_\mathcal{G}\colon\Omega\times\mathfrak{P}(\Omega)\to[0,+\infty]$ such that $\gamma_\mathcal{G}(\omega,P)<+\infty$ implies $P([\omega]_\mathcal{G})=1$ for every $\omega\in\Omega$.
	Then $\mathcal{E}(\cdot):=\mathcal{E}'(\mathcal{E}(\cdot|\mathcal{G}))$ defines a nonlinear expectation from $\mathcal{L}(\mathcal{F})$ to $\mathbb{R}$	and 
	\begin{align}
	\label{eq:rep.composition}
	\mathcal{E}(X)
	=\sup_{P=P\otimes P_\mathcal{G}\in\mathfrak{P}(\Omega)} 
	\Big(E_P[X]-\big(\beta(P)+E_P[\gamma_\mathcal{G}(\cdot,P_\mathcal{G}(\cdot))]\big)\Big)
	\end{align}
	for $X\in\mathcal{L}(\mathcal{F})$.
\end{lemma}
\begin{proof}
	It is clear that 	
	$\mathcal{E}(\cdot):=\mathcal{E}'(\mathcal{E}(\cdot|\mathcal{G}))\colon\mathcal{L}(\mathcal{F})\to\mathbb{R}$
	defines a nonlinear expectation.
	For $P=P\otimes P_\mathcal{G}\in\mathfrak{P}(\Omega)$ and $X\in\mathcal{L}(\mathcal{F})$ one has
	\begin{align*}
	\mathcal{E}(X)
	=\mathcal{E}'(\mathcal{E}(X|\mathcal{G}))
	&\geq E_P[\mathcal{E}(X|\mathcal{G})]-\beta(P)\\
	&\geq E_P[E_{P_\mathcal{G}(\cdot)}[X]-\gamma_\mathcal{G}(\cdot,P_\mathcal{G}(\cdot))]-\beta(P)\\
	&=E_P[X] -(\beta(P)+E_P[\gamma_\mathcal{G}(\cdot,P_\mathcal{G}(\cdot))]),
	\end{align*}
	which shows that the left hand side in \eqref{eq:rep.composition} is larger than the right hand side.

	To prove the reverse inequality fix some $X\in\mathcal{L}(\mathcal{F})$ and $\varepsilon>0$, and let $Q\in\mathfrak{P}(\Omega)$ be such that
	\begin{align}
	\label{eq:tower.weak.epsilon}
	\mathcal{E}'(\mathcal{E}(X|\mathcal{G}))
	\leq E_Q[\mathcal{E}(X|\mathcal{G})] -\beta(Q)+\varepsilon. 
	\end{align}
	By \cite[Proposition 7.48]{bertsekas1978stochastic} the mapping
	\[ \Omega\times\mathfrak{P}(\Omega)\to[-\infty,+\infty),
	\quad (\omega,P)\mapsto E_P[X] -\gamma_\mathcal{G}(\omega,P)\]
	is $\mathcal{G}\otimes\mathcal{B}(\mathfrak{P}(\Omega))$-upper semianalytic and as $X$ is bounded from above, by \cite[Theorem 6.9.12]{bogachev2007measure}, there exists a $\mathcal{G}$-universally measurable mapping $R\colon \Omega\to \mathfrak{P}(\Omega)$ such that
	\[ \mathcal{E}(X|\mathcal{G})(\omega)
	=\sup_{P\in\mathfrak{P}(\Omega)} (E_P[X] -\gamma_\mathcal{G}(\omega,P))
	\leq E_{R(\omega)}[X] -\gamma_\mathcal{G}(\omega,R(\omega))+ \varepsilon \]
	for every $\omega\in \Omega$.
	Define $P:=Q\otimes R\in\mathfrak{P}(\Omega)$.
	As $R(\omega)([\omega]_\mathcal{G})=1$ for all $\omega\in\Omega$, one has that $P=Q$ on $\mathcal{G}$, and as $R$ is measurable w.r.t.~the $P$-completion of $\mathcal{G}$, it is a version of the conditional disintegration, that is $R=P_\mathcal{G}$ $P$-almost surely.
	Together with \eqref{eq:tower.weak.epsilon} this implies
	\begin{align*} 
	\mathcal{E}'(\mathcal{E}(X|\mathcal{G}))
	&\leq E_{Q}[E_{R(\cdot)}[X] -\gamma_\mathcal{G}(\cdot,R(\cdot))+ \varepsilon] -\beta(Q)+\varepsilon\\
	&=E_P[X] - \big(\beta(P)+E_P[\gamma_\mathcal{G}(\cdot,P_\mathcal{G}(\cdot))]\big)+2\varepsilon.
	\end{align*}
	As $\varepsilon$ was arbitrary, the claim follows.
\end{proof}

\begin{remark}
\label{rem:gen.tight.not.E.tight}
	If in the above lemma both $\mathcal{E}'(\cdot)$ and $\mathcal{E}(\cdot|\mathcal{G})$ satisfy condition (A), then $\mathcal{E}'(\mathcal{E}(\cdot|\mathcal{G}))$ does not necessary satisfy (A).
\end{remark}
\begin{proof}
	Let $\Omega=\mathbb{R}^2$ with $\mathcal{G}$ generated by the first coordinate, and define 
	\[ \mathcal{E}'(X):=\sup_{x\in[0,1]} X(x) \quad\text{and}\quad
	\mathcal{E}(X|\mathcal{G})(\omega):=X\Big(\omega_1,\frac{1}{\omega_1}\Big) 
	\quad\text{with } \frac{1}{0}:=0\]
	for $X\in\mathcal{L}(\mathcal{G})$ and $X\in\mathcal{L}(\mathcal{F})$, respectively, and $\omega=(\omega_1,\omega_2)\in\Omega$.
	Since $x\mapsto(x,1/x)$ is Borel, it follows from \cite[Lemma 7.30]{bertsekas1978stochastic} that $\mathcal{E}(X|\mathcal{G})\in\mathcal{L}(\mathcal{G})$.
	The sequence of functions $X_n\in C_b(\Omega)$ defined by 
	\[X_n(\omega):=(\omega_2-n+1)1_{[n-1,n]}(\omega_2)+1_{(n,\infty)}(\omega_2)\quad\text{for } \omega=(\omega_1,\omega_2)\in \Omega\]
	satisfies $X_n\downarrow0$ but 
	\[\mathcal{E}'(\mathcal{E}(X_n|\mathcal{G}))
	=\sup_{x\in[0,1]} X_n\Big(x,\frac{1}{x}\Big)
	\geq X_n\Big(\frac{1}{n},n\Big)
	=1 \]
	for all $n$.
	Hence $\mathcal{E}'(\mathcal{E}(\cdot|\mathcal{G}))$ does not satisfy condition (A), while both $\mathcal{E}'(\cdot)$ and $\mathcal{E}(\cdot|\mathcal{G})$ clearly do satisfy (A).	
\end{proof}

In Lemma \ref{lem:composition} is was shown that the composition of nonlinear expectations can be 
represented by a function which equals the (integrated) sum over $\beta$ and $\gamma_\mathcal{G}$.
However, it was not shown that this function is minimal in the sense that it is lower semicontinuous in $P$
and thus given by formula \eqref{eq:def.alpha}.
The following theorem shows that, given additional regularity, this is true.	

\begin{theorem}
	\label{thm:char.tower}
	Let $\mathcal{E}(\cdot)\colon\mathcal{L}(\mathcal{F})\to\mathbb{R}$ and 
	$\mathcal{E}(\cdot|\mathcal{G})\colon\mathcal{L}(\mathcal{F})\to\mathcal{L}(\mathcal{G})$ be two (conditional) nonlinear expectations which satisfy (A) and (B) and therefore 
	\begin{align*}
	\mathcal{E}(X)&=\sup_{P\in\mathfrak{P}(\Omega)} (E_P[X]-\alpha(P))
 	& & \text{for }X\in\mathcal{L}(\mathcal{F}),\\
	\mathcal{E}(X|\mathcal{G})(\omega)&=\sup_{P\in\mathfrak{P}(\Omega)} (E_P[X]-\gamma_\mathcal{G}(\omega,P))
	& & \text{for }X\in\mathcal{L}(\mathcal{F}),
	\end{align*}
	where
	$\alpha\colon\mathfrak{P}(\Omega)\to[0,+\infty]$ is lower semicontinuous and
	$\gamma_\mathcal{G}\colon\Omega\times\mathfrak{P}(\Omega)\to[0,+\infty]$
	is as in the first part of Theorem \ref{thm:cond.rep}.
	Define 
	$\beta(P):=\sup_{X\in\mathcal{L}(\mathcal{G})} (E_P[X]-\mathcal{E}(X))$ for $P\in\mathfrak{P}(\Omega)$.
	Then
	\[ \mathcal{E}(\cdot)\leq \mathcal{E}(\mathcal{E}(\cdot|\mathcal{G}))
	\quad\text{if and only if}\quad
	\alpha(P)
	\geq \beta(P) + E_P[\gamma_\mathcal{G}(\cdot,P_\mathcal{G}(\cdot))]\] 
	for all $P=P\otimes P_\mathcal{G}\in\mathfrak{P}(\Omega)$.
	Assume further that $\mathcal{E}(X|\mathcal{G})$ is $\mathcal{G}$-measurable for every $X\in C_b(\Omega)$.
	Then 
	\[ \mathcal{E}(\cdot)\geq \mathcal{E}(\mathcal{E}(\cdot|\mathcal{G}))
	\quad\text{if and only if}\quad
	\alpha(P)
	\leq \beta(P) + E_P[\gamma_\mathcal{G}(\cdot,P_\mathcal{G}(\cdot))]\] 
	for all $P=P\otimes P_\mathcal{G}\in\mathfrak{P}(\Omega)$.
\end{theorem}

\begin{remark}
\label{rem:beta.easy.form}
	If in the setting of Theorem \ref{thm:char.tower} the function $\alpha$ has compact sublevel sets $\{\alpha\leq c\}$ for every $c\in\mathbb{R}$ and $\mathcal{G}=\sigma(\phi)$ for some continuous $\phi\colon \Omega\to S$ where $S$ is another Polish space, then the function $\beta$ has the following more intuitive formula
	\[\beta(P)=\inf\{ \alpha (Q) : Q\in\mathfrak{P}(\Omega) \text{ such that } Q=P \text{ on } \mathcal{G}\} \quad\text{for }P\in\mathfrak{P}(\Omega).\]
	Moreover, $\beta$ is convex, the infimum over $Q$ is attained, and $\{\beta\leq c\}$ is compact in the (not Hausdorff) topology $\sigma(\mathfrak{P}(\Omega), C_b(\Omega)\cap \mathcal{L}(\mathcal{G}))$ for every $c\in\mathbb{R}$.
\end{remark}
\begin{proof}
	It follows from the definition of $\beta(P)=\sup_{X\in\mathcal{L}(\mathcal{G})} (E_P[X]-\mathcal{E}(X))$ that $\beta$ is convex and $\beta(P)=\beta(Q)$ if $P=Q$ on $\mathcal{G}$.
	At the beginning of the proof of Theorem \ref{thm:char.tower} below, it will be shown that $\beta(Q)\leq\alpha(Q)$ for all $Q$ from which it follows that $\beta(P)\leq  \inf\{ \alpha (Q) : Q=P \text{ on } \mathcal{G}\}$.
	On the other hand, let $C:=C_b(\Omega)\cap\mathcal{L}(\mathcal{G})$, so that
	\[\beta(P)\geq\sup_{X\in C} (E_P[X]-\mathcal{E}(X))
	=\sup_{X\in C}\inf_{Q\in\mathfrak{P}(\Omega)} (E_P[X]-E_Q[X]+\alpha(Q)).\]
	Using a minimax theorem \cite[Theorem 2]{fan1953minimax} and the fact that $P=Q$ on $\mathcal{G}$ if and only if $E_P[X]=E_Q[X]$ for all $X\in C$ (this follows e.g.~from a monotone class theorem) one obtains that $\beta(P)\geq \inf\{ \alpha (Q) : Q=P \text{ on } \mathcal{G}\}$; hence the claimed formula for $\beta$ holds.
	To show that the infimum is attained assume $\beta(P)<+\infty$ and let $Q_n$ be a minimizing sequence.
	Due to compactness of $\{\alpha\leq c\}$ for all $c\in\mathbb{R}$, there is $Q$ and subsequence still denoted by $Q_n$ such that $Q_n\to Q$. 
	As $\sigma (C)=\mathcal{G}$, a monotone class argument implies $Q=P$ on $\mathcal{G}$.
	The claim now follows from lower semicontinuity of $\alpha$.
	As for compactness, let $P_\theta$ be a net in $\{\beta\leq c\}$.
	Then, by definition, there is a net $Q_\theta$ in $\{\alpha\leq c\}$ with $Q_\theta=P_\theta$ on $\mathcal{G}$.
	By compactness of $\{\alpha\leq c\}$, there is $Q$ and a subnet still denoted by $Q_\theta$ with $Q_\theta\to Q$.
	Let $P:=Q\in\{\beta\leq c\}$.
	Then, for every $X\in C_b(\Omega)\cap\mathcal{L}(\mathcal{G})$, one has $E_P[X]=E_Q[X]=\lim_\theta E_{Q_\theta}[X]=\lim_\theta E_{P_\theta}[X]$ showing that $P_\theta\to P$ in $\sigma(\mathfrak{P}(\Omega), C_b(\Omega)\cap\mathcal{L}(\mathcal{G}))$.
\end{proof}

\begin{proof}[\text{\bf Proof of Theorem \ref{thm:char.tower}}]
	In a first step observe that
	\begin{align}
	\label{eq:rep.E.on.G} 
	\mathcal{E}(X)=\sup_{P\in\mathfrak{P}(\Omega)} (E_P[X]-\beta(P))
	\quad\text{for } X\in\mathcal{L}(\mathcal{G}).
	\end{align}
	Indeed, the left hand side in \eqref{eq:rep.E.on.G} is smaller than the right hand side by definition of $\beta(P)$.
	On the other hand, \eqref{eq:rep.E.on.G} holds by assumption when $\beta$ is replaced by $\alpha$.
	As
	\[\alpha(P)
	=\sup_{X\in C_b(\Omega)}(E_P[X]-\mathcal{E}(X))
	=\sup_{X\in \mathcal{L}(\mathcal{F})}(E_P[X]-\mathcal{E}(X))
	\geq\beta(P)\]
	where the first equality holds by Remark \ref{rem:unique.penalty} due to lower semicontonuty of $\alpha$ and the second one due to the dual presentation $\mathcal{E}(X)=\sup_{P\in\mathfrak{P}(\Omega)}(E_P[X]-\alpha(P))$, it follows that \eqref{eq:rep.E.on.G} holds true.
	Now define 
	\[\delta(P):=\beta(P)+E_P[\gamma_\mathcal{G}(\cdot,P_\mathcal{G}(\cdot))]
	\quad\text{for } P=P\otimes P_\mathcal{G}\in\mathfrak{P}(\Omega).\]		
	It follows from Lemma \ref{lem:composition} that
	\begin{align}
	\label{eq:comp.rep.beta} 
	\mathcal{E}(\mathcal{E}(X|\mathcal{G}))
	=\sup_{P\in\mathfrak{P}(\Omega)} (E_P[X]-\delta(P))
	\quad\text{for } X\in\mathcal{L}(\mathcal{F}).
	\end{align}	
	In particular if $\alpha\leq\delta$, then $\mathcal{E}(\cdot)\geq \mathcal{E}(\mathcal{E}(\cdot|\mathcal{G}))$; and if $\alpha\geq\delta$, then $\mathcal{E}(\cdot)\leq \mathcal{E}(\mathcal{E}(\cdot|\mathcal{G}))$.		

	Now assume that $\mathcal{E}(\cdot)\geq \mathcal{E}(\mathcal{E}(\cdot|\mathcal{G}))$ and fix some $P\in\mathfrak{P}(\Omega)$. 
	By \eqref{eq:comp.rep.beta} it holds
	\[ \mathcal{E}(X)
	\geq \mathcal{E}(\mathcal{E}(X|\mathcal{G}))
	\geq E_P[X]-\delta(P) 
	\quad\text{for } X\in\mathcal{L}(\mathcal{F})\]
	and therefore
	\[ \alpha(P)
	=\sup_{X\in C_b(\Omega)} (E_P[X]-\mathcal{E}(X))
	\leq \sup_{X\in C_b(\Omega)} (E_P[X]-(E_P[X]-\delta(P)))
	=\delta(P).\]
	As $P$ was arbitrary, this shows $\alpha\leq\delta$.	
	
	It remains to prove that $\alpha\geq\delta$ if $\mathcal{E}(\cdot)\leq\mathcal{E}(\mathcal{E}(\cdot|\mathcal{G}))$	and $\mathcal{E}(X|\mathcal{G})$ is $\mathcal{G}$-measurable for every $X\in C_b(\Omega)$.
	To that end, one may argue as in the proof of Theorem \ref{thm:cond.rep} and choose a metric $d'$ on $\Omega$ under which the 
	space of bounded and uniformly continuous functions $uc_b(\Omega,d')$ becomes separable.
	Let $D$ be a countable dense subset and define $D':=\{X-q : X\in D \text{ and } q\in\mathbb{Q}\}$.
	For each $n$, let $D_n\subset D'$ such that $D_n$ consist of exactly $n$ elements, $0\in D_n\subset D_{n+1}$, and $\bigcup\{ D_n : n\in\mathbb{N}\}=D'$.
	Then it holds that
	\begin{align}
	\label{eq:alpha.equals.sup.alphan}
	\gamma_\mathcal{G}(\omega,P)
	&=\sup_n \gamma_\mathcal{G}^n(\omega,P) \quad\text{for } \omega\in \Omega\text{ and } P\in\mathfrak{P}(\Omega),
	\text{ where}\\
	\gamma_\mathcal{G}^n(\omega,P)
	&:=\max\{E_{P}[X]:
	X\in D_n \text{ such that } \mathcal{E}(X|\mathcal{G})(\omega)\leq 0\} \nonumber
	\end{align}
	for every $n$.
	Indeed, as $D_n\subset C_b(\Omega)$, it follows that $\gamma_\mathcal{G}(\omega,P)\geq\sup_n\gamma_\mathcal{G}^n(\omega,P)$.
	For the converse inequality, due to lower semicontinuity of $\gamma_\mathcal{G}(\omega,\cdot)$, one has
	\[ 	\gamma_\mathcal{G}(\omega,P)=\sup_{X\in C_b(\Omega)} (E_P[X]-\mathcal{E}(X|\mathcal{G})(\omega)), \]
	see Remark \ref{rem:unique.penalty}.
	Let $X\in C_b(\Omega)$ and $\varepsilon>0$ be arbitrary.
	It then follows as in the proof of Theorem \ref{thm:cond.rep} that there exists some $Y\in D$ such that
	\[E_P[Y]-\mathcal{E}(Y|\mathcal{G})(\omega)
	\geq E_P[X]-\mathcal{E}(X|\mathcal{G})(\omega)-\varepsilon.\]
	Now let $q$ be rational such that $0\leq q-\mathcal{E}(Y|\mathcal{G})(\omega)\leq\varepsilon$ and define $Z:=Y-q$. 
	Then $\mathcal{E}(Z|\mathcal{G})(\omega)\leq 0$ and 
	\[E_P[Z]
	\geq E_P[Y] - \mathcal{E}(Y|\mathcal{G})(\omega)-\varepsilon
	\geq E_P[X] - \mathcal{E}(X|\mathcal{G})(\omega)-2\varepsilon \]
	Since $Z\in D_n$ for some large $n$, it follows that
	\[\sup_n\gamma_\mathcal{G}^n(\omega,P)
	\geq E_P[Z]
	\geq  E_P[X]-\mathcal{E}(X|\mathcal{G})(\omega)-2\varepsilon\] 
	and, as $X\in C_b(\Omega)$ was arbitrary, $\sup_n\gamma_\mathcal{G}^n(\omega,P)\geq \gamma_\mathcal{G}(\omega,P)-2\varepsilon$.
	This establishes \eqref{eq:alpha.equals.sup.alphan}.

	For the remainder fix some $P\in\mathfrak{P}(\Omega)$.
	Note that $0\in D_n$ and $D_n\subset D_{n+1}$ imply $0\leq \gamma_\mathcal{G}^n\leq\gamma_\mathcal{G}^{n+1}$ for all $n$,  hence the monotone convergence theorem yields
	\begin{align}
	\label{eq:beta.equal.sumYn}
	\begin{aligned}
	\delta(P)
	&=\beta(P) + E_P[\gamma_\mathcal{G}(\cdot,P_\mathcal{G}(\cdot))]\\
	&=\sup_{Y\in \mathcal{L}(\mathcal{G})} (E_Q[Y]-\mathcal{E}(Y)) +\sup_n E_P[\gamma_\mathcal{G}^n(\cdot,P_\mathcal{G}(\cdot))].
	\end{aligned}
	\end{align} 	
	Fix some $Y\in \mathcal{L}(\mathcal{G})$ and $n$, and let $D_n=\{X_1,\dots,X_n\}$ be an enumeration of the set $D_n$.
	Define $i\colon\Omega\to\{1,\dots,n\}$ by
	\[ i(\omega):=\min\big\{ i\in\{1,\dots,n\} : E_{P_\mathcal{G}(\omega)}[X_i]=\gamma_\mathcal{G}^n(\omega,P_\mathcal{G}(\omega))
	\text{ and } \mathcal{E}(X_i|\mathcal{G})(\omega)\leq 0 \big\}. \]
	The three terms inside the minimum which depend on $\omega$ are $\mathcal{G}$-measurable, hence $i$ is also $\mathcal{G}$-measurable.
	Define the function 
	\[Z(\omega):=Y(\omega)+X_{i(\omega)}(\omega)  
	\quad\text{for } \omega\in \Omega.\]
	Then $Z$ is $\mathcal{F}$-upper semianalytic and bounded (as $D_n$ is a finite set), that is, $Z\in\mathcal{L}(\mathcal{F})$.
	For every $P$ with $P([\omega]_\mathcal{G})=1$ one has $E_P[Z]=Y(\omega)+E_P[X_{i(\omega)}(\cdot)]$, hence the dual representation of $\mathcal{E}(\cdot|\mathcal{G})$ and choice of $X_{i(\omega)}$ imply that 
	\[\mathcal{E}(Z|\mathcal{G})(\omega)
	=Y(\omega)+\mathcal{E}(X_{i(\omega)}|\mathcal{G})(\omega)
	\leq Y(\omega).\]
	Therefore, by monotonicity of $\mathcal{E}(\cdot)$ and the assumption that  $\mathcal{E}(\cdot)\leq\mathcal{E}(\mathcal{E}(\cdot|\mathcal{G}))$, one obtains
	$\mathcal{E}(Z)
	\leq \mathcal{E}(\mathcal{E}(Z|\mathcal{G}))
	\leq \mathcal{E}(Y)$
	so that
	\begin{align*}
	E_P[Z]-\mathcal{E}(Z)
	&\geq E_P[Y]+E_{P(d\omega)}[E_{P_\mathcal{G}(\omega)}[X_{i(\omega)}(\cdot)]-\mathcal{E}(Y)\\
	&=E_P[Y]-\mathcal{E}(Y)+E_{P}[\gamma_\mathcal{G}^n(\cdot,P_\mathcal{G}(\cdot))].
	\end{align*}
	As $\mathcal{E}(Z)\geq E_P[Z]-\alpha(P)$ by the dual representation of $\mathcal{E}(\cdot)$, one has $\alpha(P)\geq E_P[Z]-\mathcal{E}(Z)$ and, as $Y$ and $n$  in  \eqref{eq:beta.equal.sumYn} were arbitrary, it follows that $\alpha(P) \geq \delta(P)$.
	This concludes the proof.
\end{proof}

\begin{proof}[\text{\bf Proof of Theorem \ref{thm:char.tower.sublin}}]
	With the notation of Theorem \ref{thm:char.tower}:
	By Remark \ref{rem:unique.penalty} one has $\mathcal{P}=\{\alpha(\cdot)=0\}$ and $\mathcal{P}_\mathcal{G}(\omega)=\{\gamma_\mathcal{G}(\omega,\cdot)=0\}$.
	As $\mathcal{E}(\cdot)$ is sublinear, $\beta$ also only takes the values $0$ and $+\infty$ (compare with the proof of Theorem \ref{thm:cond.rep.sublin}), and one can set  
	\[\mathcal{Q}:=\{\beta\leq 0\}=\{ Q\in\mathfrak{P}(\Omega) : \text{ there is } P\in\mathcal{P} \text{ with } P=Q\text{ on } \mathcal{G}\},\]
	where the last equality is due to Remark \ref{rem:beta.easy.form}.
	Now, by Theorem \ref{thm:char.tower}, one has $\mathcal{E}(\cdot)=\mathcal{E}(\mathcal{E}(\cdot|\mathcal{G}))$ if and only if $\alpha(P)=\beta(P)+E_P[\gamma_\mathcal{G}(\cdot,P_\mathcal{G}(\cdot))]$ for all $P=P\otimes P_\mathcal{G}\in\mathfrak{P}(\Omega)$.
	By definition $\beta(P)+E_P[\gamma_\mathcal{G}(\cdot,P_\mathcal{G}(\cdot))]=0$ if and only if $P\in\mathcal{Q}$ and, $P$-almost surely, $P_\mathcal{G}(\cdot)\in\mathcal{P}_\mathcal{G}(\cdot)$, so that $\mathcal{E}(\cdot)=\mathcal{E}(\mathcal{E}(\cdot|\mathcal{G}))$ if and only if 
	\[\mathcal{P}=\{  P\in\mathfrak{P}(\Omega) : P\in\mathcal{Q} \text{ and } P_\mathcal{G}(\cdot)\in\mathcal{P}_\mathcal{G}(\cdot) \,\,P\text{-almost surely} \}. \]
	Finally, as $\mathcal{Q}\otimes\mathcal{P}_\mathcal{G}=\mathcal{P}\otimes\mathcal{P}_\mathcal{G}$, the claim follows.
\end{proof}

\begin{proposition}
	Let $\mathcal{E}(\cdot|\mathcal{G})$ be a conditional sublinear expectation which satisfies (A) and (B)
	and therefore $\mathcal{E}(X|\mathcal{G})(\omega)=\sup_{P\in\mathcal{P}_\mathcal{G}(\omega)}E_P[X]$
	for some $\mathcal{P}_\mathcal{G}$ as in Theorem \ref{thm:cond.rep.sublin}.
	If $\Omega$ is compact, then $\mathcal{E}(X|\mathcal{G})$ is $\mathcal{G}$-measurable for $X\in C_b(\Omega)$ if and only if $\mathcal{P}_\mathcal{G}$ is weakly $\mathcal{G}$-measurable, that is, for every open set $O\subset\mathfrak{P}(\Omega)$, the weak inverse $\{\omega\in\Omega : \mathcal{P}_\mathcal{G}(\omega)\cap O\neq\emptyset\}$ is in $\mathcal{G}$.  
\end{proposition}
\begin{proof}
	Assume first that $\mathcal{P}_\mathcal{G}\colon \Omega\rightsquigarrow\mathfrak{P}(\Omega)$ is weakly $\mathcal{G}$-measurable.
	Then, since it has nonempty, convex, and compact values, it admits a Castaing representation \cite[Corollary 18.14]{aliprantis2006infinite}: 
	There are $\mathcal{G}$-measurable mappings $R_n\colon \Omega\to\mathfrak{P}(\Omega)$  such that the closure of $\{ R_n(\omega) : n\in\mathbb{N}\}$ equals $\mathcal{P}_\mathcal{G}(\omega)$ for every $\omega\in\Omega$.
	Therefore
	\[ \mathcal{E}(X|\mathcal{G})(\omega)=\sup_n E_{R_n(\omega)}[X]
	\quad\text{for }\omega\in\Omega \text{ and } X\in C_b(\Omega)\]	
	and as $\omega\mapsto E_{R_n(\omega)}[X]$ is Borel \cite[Lemma 7.30]{bertsekas1978stochastic} for every $n$, the claim follows.

	On the other hand, assume that $\mathcal{E}(X|\mathcal{G})$ is $\mathcal{G}$-measurable for every $X\in C_b(\Omega)$ and let $O\subset\mathfrak{P}(\Omega)$ be open.
	Since the weak topology on $\mathfrak{P}(\Omega)$ is locally convex and metrizable, there are closed and convex sets $C_n\subset\mathfrak{P}(\Omega)$ such that $O=\bigcup\{ C_n :n\in\mathbb{N}\}$.
	Therefore $\mathcal{P}_\mathcal{G}(\omega)\cap O = \emptyset$ if and only if $\mathcal{P}_\mathcal{G}(\omega)\cap C_n = \emptyset$ for all $n$.
	Now, as $\mathcal{P}_\mathcal{G}(\omega)$ is compact and convex for every $\omega\in\Omega$ by assumption, the hyper plane separation theorem yields that $\mathcal{P}_\mathcal{G}(\omega)\cap C_n=\emptyset$ if and only if  
	\[\mathcal{E}(X|\mathcal{G})(\omega)
	=\sup_{P\in\mathcal{P}_\mathcal{G}(\omega)}E_P[X]
	<\inf_{P\in C_n} E_P[X]\]
	for some $X\in C_b(\Omega)$.
	As $C_b(\Omega)$ is separable (w.r.t.~to the maximum norm), $X$ can in fact be chosen in some fixed (i.e.~independent of $\omega$ and $n$) countable dense set $D\subset C_b(\Omega)$.
	Therefore
	\[\{\omega\in\Omega : \mathcal{P}_\mathcal{G}(\omega)\cap O = \emptyset\}
	=\bigcap_n\bigcup_{X\in D} \Big\{\omega\in\Omega :\mathcal{E}(X|\mathcal{G})(\omega)< \inf_{P\in C_n} E_P[X]\Big\}. \]
	By assumption all sets on the right hand side are in $\mathcal{G}$, hence the countable union and intersection is in $\mathcal{G}$, too.
\end{proof}

This section ends with an explanation why the upper semianalytic functions (instead of e.g.~Borel functions) are the natural domain and range for conditional nonlinear expectations.
For an illustration on the basis of a concrete example (the $G$-Brownian motion) see \cite{nutz2013constructing}, in particular Section 5.3 and Section 5.4 therein.

\begin{remark}
\label{rem:why.usa}
	As already mentioned in the introduction, Theorem \ref{thm:cond.rep.sublin} should be a extension of Choquet's theorem to the conditional case.
	Therefore it should at least cover the case of conditional nonlinear expectations which are represented by a set-valued mapping $\mathcal{P}_\mathcal{G}$ which depends as little on $\omega$ as possible.
	However, already in this setting it makes little sense to work with linear spaces instead
	of the semianalytic functions.
		
	For example, let $\Omega:=[0,1]^3$ equipped with $\mathcal{G}$ generated by the first two coordinates and $\mathcal{H}$ generated by the first coordinate.
	Define
	\[\mathcal{E}(X|\mathcal{G})(\omega):=\sup_{x\in[0,1]} X(\omega_1,\omega_2,x),
	\hspace{0.5em}
	\mathcal{E}(X|\mathcal{H})(\omega):=\sup_{x,y\in[0,1]} X(\omega_1,x,y) \]
	for $\omega=(\omega_1,\omega_2,\omega_3)\in\Omega$.
	Now assume that there are linear spaces of (bounded) functions $L(\mathcal{F})$, $L(\mathcal{G})$, and $L(\mathcal{H})$	such that $L(\mathcal{F})$ contains all bounded $\mathcal{F}$-measurable functions, and $\mathcal{E}(\cdot|\mathcal{G})$ and $\mathcal{E}(\cdot|\mathcal{H})$ are mappings from $L(\mathcal{F})$ to $L(\mathcal{G})$	and $L(\mathcal{H})$, respectively. 
	By the projective description of analytic sets, every $\mathcal{B}([0,1]^2)$-analytic set $A\subset [0,1]^2$ is the projection of some Borel set $B\subset \Omega$. 
	As $\mathcal{E}(1_B|\mathcal{G})(\omega)=1_A(\omega_1,\omega_2)$ and $L(\mathcal{G})$ is a linear space, it contains all complements of analytic sets. 
	However, the projection of $A^c$ on the first component (denoted by $N\subset [0,1]$) needs not to be universally measurable, and $\mathcal{E}(1_{A^c}|\mathcal{H})(\omega)=1_N(\omega_1)$ implies that $L(\mathcal{H})$ contains non-universally measurable functions (in fact even non-Lebesgue-measurable ones \cite[Section 5.4]{nutz2013constructing}).
	But this implies that even if $\mathcal{E}(\cdot):=E_P[\cdot]$ for some $P$, one can not define $\mathcal{E}(X)$ for $X\in L(\mathcal{H})$.
\end{remark}

\section{Applications, extensions, and examples}
\label{sec:example.extension}

\subsection{Risk measures under Knightian uncertainty}
\label{sec:sub.risk}

Let $(\Omega,(\mathcal{F}_t)_{t=1,\dots,T},\mathcal{F})$ be a filtered space, where $\Omega$ is Polish with Borel $\sigma$-field $\mathcal{F}$, $T\in\mathbb{N}$, and each $\mathcal{F}_t$ is assumed to be countably generated.
For every $t$, let $\mathcal{E}(\cdot|\mathcal{F}_t)\colon\mathcal{L}(\mathcal{F})\to\mathcal{L}(\mathcal{F}_t)$ 
be a conditional sublinear expectation which satisfies (A) and (B) and therefore has the representation 
$\mathcal{E}(X|\mathcal{F}_t)(\omega)=\sup_{P\in\mathcal{P}_{\mathcal{F}_t}(\omega)} E_P[X]$ as in Theorem \ref{thm:cond.rep.sublin}.
By (a slight modification of) Lemma \ref{lem:composition} the composition  $\mathcal{E}_{t,T}(\cdot):=\mathcal{E}(\mathcal{E}(\cdots \mathcal{E}(\cdot|\mathcal{F}_{T})\cdots|\mathcal{F}_{t+1})|\mathcal{F}_t)$ defines a sublinear expectation from $\mathcal{L}(\mathcal{F})$ to $\mathcal{L}(\mathcal{F}_t)$ with representation
\[ \mathcal{E}_{t,T}(X)(\omega)=\sup_{P\in\mathcal{P}_{t,T}(\omega)} E_P[X]
\quad\text{for }\omega\in\Omega\text{ and }X\in\mathcal{L}(\mathcal{F}),\]
where $\mathcal{P}_{t,T}(\omega):=\mathcal{P}_{\mathcal{F}_t}(\omega)\otimes\cdots\otimes \mathcal{P}_{\mathcal{F}_{T}}$.
Now let $l\colon\mathbb{R}\to\mathbb{R}$, $x\mapsto x^+/\lambda$ for some $\lambda\in(0,1)$,
and define the time-consistent robust average value at risk
by $\mathcal{R}_{t,T}(\cdot):=\mathcal{R}(\mathcal{R}(\cdots \mathcal{R}(\cdot|\mathcal{F}_T)\cdots|\mathcal{F}_{t+1})|\mathcal{F}_t)$, where
\begin{align}
	\label{eq:avar}
	\mathcal{R}(X|\mathcal{F}_t)(\omega)
	:= \inf_{s\in\mathbb{R}} \Big(\mathcal{E}(l(X-s)|\mathcal{F}_t)(\omega) +s\Big)
\end{align}
for $\omega\in\Omega$ and $X\in\mathcal{L}(\mathcal{F})$.

\begin{example}
\label{ex:avar.robust}
	For every $t$, the functional $\mathcal{R}(\cdot|\mathcal{F}_t)$ is a conditional sublinear expectation	which satisfies condition (A) and (B) and has the representation
	\[ \mathcal{R}(X|\mathcal{F}_t)(\omega)
	=\sup_{Q\in\mathcal{Q}_{\mathcal{F}_t}(\omega)} E_Q[X]
	\quad\text{for } \omega\in\Omega \text{ and } X\in\mathcal{L}(\mathcal{F})	 \]
	where
	$\mathcal{Q}_{\mathcal{F}_t}(\omega)$ is the set of all probabilities $Q$ for which there exists $P\in\mathcal{P}_{\mathcal{F}_t}(\omega)$ such that $Q$ is absolutely continuous w.r.t.~$P$ and the Radon-Nykodim derivative $dQ/dP$ is bounded by $1/\lambda$. 
	Moreover
	\[ \mathcal{R}_{t,T}(X)(\omega)=\sup_{Q\in\mathcal{Q}_{t,T}(\omega)}E_Q[X]
	\quad\text{for } \omega\in\Omega \text{ and } X\in\mathcal{L}(\mathcal{F}), \]
	where $\mathcal{Q}_{t,T}(\omega):=\mathcal{Q}_{\mathcal{F}_t}(\omega)\otimes\cdots\otimes \mathcal{Q}_{\mathcal{F}_T}$.
\end{example}

\begin{remark}
	In a one-period setting (i.e.~$T=1$), one class of examples in robust mathematical finance emerges from taking an estimator $P^\ast$ and replacing $E_{P^\ast}[\cdot]$ by $\mathcal{E}(\cdot):=\sup_{P\in\mathcal{P}}E_P[\cdot]$, where $\mathcal{P}$ is the neighborhood (say in Wasserstein distance) of $P^\ast$; see e.g.~\cite{bartl2017computational,obloj2018statistical} and references therein for a motivation.
	It can be shown that $\mathcal{R}(\cdot|\mathcal{F}_t)$ then has a simple formula \cite[Example 2.10]{bartl2017computational}.
	This example has a natural lift to a multi-period setting by considering the set $\mathcal{P}$ of probabilities for which all conditional distributions (w.r.t.~the filtration) are in the neighborhood of the conditional distributions $P^\ast$; see e.g.~\cite[Chapter 2.3]{bartl2016exponential} for a discussion.
	When applying standard static methods to tackle optimization problems with $E_{P^\ast}[\cdot]$ replaced by $\mathcal{E}(\cdot)$, it is often crucial for the set $\mathcal{P}$ to be compact.
	In Example \ref{ex:wasserstein.compact} below, it will be shown that a Feller condition on $P^\ast$ is sufficient to guarantee compactness of $\mathcal{P}$.
\end{remark}

\begin{proof}[{\bf Proof of Example \ref{ex:avar.robust}}]
	As $l$ is increasing, one has $l(X-s)\in\mathcal{L}(\mathcal{F})$ for every $X\in\mathcal{L}(\mathcal{F})$ and $s\in\mathbb{R}$ so that $\mathcal{R}(\cdot|\mathcal{F}_t)$  is well-defined.
	Moreover, as the mapping $s\mapsto \mathcal{E}(l(X-s)|\mathcal{F}_t)(\omega) +s$ is convex and real-valued for every $\omega\in\Omega$, it is continuous and one may restrict the infimum in \eqref{eq:avar} to $s\in\mathbb{Q}$. 
	Therefore, as the countable infimum, $\mathcal{R}(X|\mathcal{F}_t)$ is $\mathcal{F}_t$-upper semianalytic whenever $X$ is.
	Elementary computations show that $\mathcal{R}(\cdot|\mathcal{F}_t)$ is increasing, sublinear, and satisfies $\mathcal{R}(X|\mathcal{F}_t)=X$ for $X\in\mathcal{L}(\mathcal{F}_t)$.
	By interchanging two infima and the fact that $\mathcal{E}(\cdot|\mathcal{F}_t)$ satisfies (A), one gets that $\mathcal{R}(\cdot|\mathcal{F}_t)$ satisfies (A) as well.
	As for condition (B), fix $\omega\in\Omega$ and let $X_n\in\mathcal{L}(\mathcal{F})$ be a sequence which increases pointwise to $X\in\mathcal{L}(\mathcal{F})$. 
	For every $n$, let $s_n\in\mathbb{R}$ such that $\mathcal{E}(l(X_n-s_n)|\mathcal{F}_t)(\omega) +s_n \leq \mathcal{R}(X_n|\mathcal{F}_t)(\omega)+1/n$.
	As the sequence $X_n$ is bounded uniformly in $n$ and $\lambda\in(0,1)$, it follows that $s_n$ is bounded and thus	a subsequence, still denoted by $s_n$, converges. 
	The dual representation of $\mathcal{E}(\cdot|\mathcal{F}_t)$ now implies that
	\[
	\mathcal{R}(X|\mathcal{F}_t)(\omega)
	\leq \mathcal{E}(l(X-s)|\mathcal{F}_t)(\omega) +s
	\leq \liminf_n\Big( \mathcal{E}(l(X_n-s_n)|\mathcal{F}_t)(\omega) +s_n\Big).\]
	The last term is smaller than $\mathcal{R}(X|\mathcal{F}_t)(\omega)$ due to the choice of $s_n$ and the fact that $\mathcal{R}(X_n|\mathcal{F}_t)\leq\mathcal{R}(X|\mathcal{F}_t)$ for each $n$.
	Hence $\mathcal{R}(\cdot|\mathcal{F}_t)$ satisfies condition (B).
	The specific form of $\mathcal{Q}_{\mathcal{F}_t}$ follows as in the case without Knightian uncertainty
	\cite[Lemma 4.51 and Theorem 4.52]{follmer2011stochastic}, additionally using a suitable minimax theorem \cite[Theorem 2]{fan1953minimax}, see \cite[Theorem 3.4]{bartl2017computational}.
	The representation of $\mathcal{R}_{t,T}(\cdot)$ is due to (a slight modification of) Lemma \ref{lem:composition}.
\end{proof}

The nonrobust version of Example \ref{ex:avar.robust} can be found e.g.~\cite[Example 2.3.1]{cheridito2011composition}.
For further literature which uses dynamic programming and conditional nonlinear expectations in the context of mathematical finance under Knightian uncertainty in discrete time, see e.g.~\cite{aksamit2016robust,carassus2016robust,bouchard2016super,burzoni2016pointwise,neufeld2016robust,nutz2014utility}.

\subsection{Compactness for product of measures and kernels}
\label{sec:compactness.kernel}

\begin{proof}[{\bf Proof of Theorem \ref{thm:compactness.of.product}}]

	Assume first that $\omega\mapsto \max_{P\in\mathcal{P}_\mathcal{G}(\omega)} E_P[X]$ is upper semicontinuous	for every $X\in C_b(\Omega)$ and that the values of $\mathcal{P}_\mathcal{G}$ are compact.
	Define the functional
	\[\mathcal{E}(\cdot|\mathcal{G})\colon\mathcal{L}(\mathcal{F})\to\mathbb{R}^\Omega,
	\quad \mathcal{E}(X|\mathcal{G})(\omega):=\sup_{P\in\mathcal{P}_\mathcal{G}(\omega)} E_P[X].\]
	Then, by convexity and compactness of each $\mathcal{P}_\mathcal{G}(\omega)$, the hyperplane separation theorem shows that
	\[\mathop{\mathrm{Graph}}(\mathcal{P}_\mathcal{G})
	=\{ (\omega,P)\in \Omega\times\mathfrak{P}(\Omega) : E_P[X]\leq \mathcal{E}(X|\mathcal{G})(\omega) 
	\text{ for all }X\in C_b(\Omega)  \}. \]
	Further, the same argumentation as in the proof of Theorem \ref{thm:cond.rep} shows that one can restrict to all $X$ in a countable set $D\subset C_b(\Omega)$.
	For every $X\in D$ the mapping $(\omega,P)\to E_P[X]-\mathcal{E}(X|\mathcal{G})(\omega)$ is $\mathcal{G}\otimes\mathcal{B}(\mathfrak{P}(\Omega))$-measurable by assumption so that $\mathop{\mathrm{Graph}}(\mathcal{P}_\mathcal{G})$, as a countable intersection, has the same measurability.
	Now Theorem \ref{thm:cond.rep.sublin} implies that $\mathcal{E}(\cdot|\mathcal{G})$ is a conditional nonlinear expectation which satisfies (A) and (B); the same holds true for $\mathcal{E}'(\cdot)\colon\mathcal{L}(\mathcal{F})\to\mathbb{R}$ defined by $\mathcal{E}'(X):=\sup_{P\in\mathcal{P}}E_P[X]$.
	Therefore $\mathcal{E}(\cdot):=\mathcal{E}'(\mathcal{E}(\cdot|\mathcal{G}))$ defines a nonlinear expectation, which clearly satisfies (B).
	Moreover, it also satisfies (A).
	Indeed, let $X_n\in C_b(\Omega)$ be a sequence which decreases pointwise to $X\in\mathcal{L}(\mathcal{F})$.
	By assumption $\mathcal{E}(X_n|\mathcal{G})\in usc_b(\Omega)$ decreases pointwise to $\mathcal{E}(X|\mathcal{G})$, therefore $\mathcal{E}(X_n)$ decreases to $\mathcal{E}(X)$ by Remark \ref{rem:cont.above.usc}.
	Now Theorem \ref{thm:cond.rep.sublin} implies that $\mathcal{E}(X)=\sup_{P\in\mathcal{Q}} E_P[X]$ for a (by Remark \ref{rem:unique.penalty} unique) convex compact set $\mathcal{Q}\subset\mathfrak{P}(\Omega)$.
	As $\mathcal{E}(\cdot)=\mathcal{E}(\mathcal{E}(\cdot|\mathcal{G}))$ by definition,	Theorem \ref{thm:char.tower.sublin} yields that
	$\mathcal{Q}=\mathcal{P}\otimes\mathcal{P}_\mathcal{G}$, which proves the claim.

	To show the reverse direction, assume that $\mathcal{P}\otimes\mathcal{P}_\mathcal{G}$ is compact for every compact convex set $\mathcal{P}$. 	
	If $\mathcal{P}_\mathcal{G}(\omega)$ is not compact for some $\omega\in\Omega$, then neither is $\mathcal{P}\otimes\mathcal{P}_\mathcal{G}$ for $\mathcal{P}:=\{\delta_\omega\}$.
	So assume that $\mathcal{P}_\mathcal{G}$ has compact values but $\omega\mapsto \max_{P\in\mathcal{P}_\mathcal{G}(\omega)} E_P[X]$ is not upper semicontinuous for some $X\in C_b(\Omega)$, i.e.~there is $\omega\in\Omega$ and a sequence $\omega_n\in \Omega$ converging to $\omega$ such that 
	\begin{align}
	\label{eq:compact.limsup}
	\limsup_n \max_{P\in\mathcal{P}_\mathcal{G}(\omega_n)} E_P[X] > \max_{P\in\mathcal{P}_\mathcal{G}(\omega)} E_P[X].
	\end{align}
	For every $n$, pick some $P_n\in\mathcal{P}_\mathcal{G}(\omega_n)$ which attains the maximum in the
	left hand side of \eqref{eq:compact.limsup}. 
	After passing to a subsequence (still denoted by $P_n$), one may assume that $E_{P_n}[X]$ converges to the left hand side of \eqref{eq:compact.limsup}.
	As $C:=\{\omega_n : n\in\mathbb{N}\}\cup\{\omega\} \subset \Omega$ is compact, the set $\mathcal{P}:=\{P \in\mathfrak{P}(\Omega) : P(C)=1\}$ is also compact (and obviously convex).
	Now distinguish between two cases.
	If $P_n$ does not have any convergent subsequence, then neither does $\delta_{\omega_n}\otimes P_n\in\mathcal{P}\otimes\mathcal{P}_\mathcal{G}$	which implies that the latter set cannot be compact.
	Otherwise, possibly after passing to a subsequence, $P_n$ converges to some $P$ and one has $P\notin\mathcal{P}_\mathcal{G}(\omega)$ by \eqref{eq:compact.limsup}.
	However, as
	\[ \mathcal{P}\otimes\mathcal{P}_\mathcal{G}\ni \delta_{\omega_n}\otimes P_n
	\to \delta_{\omega}\otimes P\notin\mathcal{P}\otimes\mathcal{P}_\mathcal{G},\]
	this implies that $\mathcal{P}\otimes\mathcal{P}_\mathcal{G}$ is not closed and completes the proof.
\end{proof}

\begin{remark}
\label{rem:conditions.compactness.kernels}
	By a variant of Berge's maximum theorem \cite[Lemma 17.30]{aliprantis2006infinite}, if $\mathcal{P}_\mathcal{G}\colon \Omega\rightsquigarrow\mathfrak{P}(\Omega)$ is upper hemi-continuous with nonempty compact values, the mapping $\omega\mapsto \max_{P\in\mathcal{P}_\mathcal{G}(\omega)} E_P[X]$ is upper semicontinuous for $X\in C_b(\Omega)$.

	Let $\phi$ in Theorem \ref{thm:compactness.of.product} such that $\phi(\Omega)$ is Borel and there exists $\psi\colon \phi(\Omega)\to\Omega$ Borel with $\psi(s)\in \phi^{-1}(\{s\})$ for all $s\in\phi(\Omega)$.
	Then $\omega\mapsto \max_{P\in\mathcal{P}_\mathcal{G}(\omega)} E_P[X]$ is $\mathcal{G}$-measurable if and only if it is $\mathcal{F}$-measurable and $\mathcal{G}$-indistinguishable, that is, $\phi(\omega)=\phi(\eta)$ implies $\max_{P\in\mathcal{P}_\mathcal{G}(\omega)} E_P[X]=\max_{P\in\mathcal{P}_\mathcal{G}(\eta)} E_P[X]$ for all $\omega,\eta\in\Omega$.	
\end{remark}

\begin{example}
\label{ex:wasserstein.compact}
	Let $\Omega=\mathbb{R}^d$ with euclidean distance, let $p\in[1,\infty)$, and denote by $\mathcal{W}_p$ the $p$-Wasserstein distance; see e.g.~\cite[Chapter 6]{villani2008optimal}.
	Let $K\colon\Omega\to\mathfrak{P}(\Omega)$ be $\mathcal{G}$-measurable and continuous w.r.t.~$\mathcal{W}_p$ such that $E_{K(\omega)}[|\mathrm{id}|^p]<+\infty$ for all $\omega\in\Omega$, and define
	\[\mathcal{P}_\mathcal{G}(\omega):=\{ P\in\mathfrak{P}(\Omega) : \mathcal{W}_p(P,K(\omega))\leq\delta \text{ and } P([\omega]_\mathcal{G})=1 \},\]
	where $\delta>0$ is fixed.
	If $\mathcal{G}=\sigma(\phi)$ for some continuous $\phi\colon\Omega\to S$ as in Remark \ref{rem:conditions.compactness.kernels}, then $\omega\mapsto \sup_{P\in\mathcal{P}_\mathcal{G}(\omega)} E_P[X]$ is upper semicontinuous and $\mathcal{G}$-measurable for $X\in C_b(\Omega)$.
	In particular, for every compact set $\mathcal{P}\subset\mathfrak{P}(\Omega)$, by Theorem \ref{thm:compactness.of.product}, the set $\mathcal{P}\otimes\mathcal{P}_\mathcal{G}$ is compact.

	In case that $\mathcal{P}$ itself is the neighborhood of some measure $P$ with $E_P[|\mathrm{id}|^p]<+\infty$ and $E_{K(\omega)}[|\mathrm{id}|^p]\leq c(1+|\omega|^p)$ for some constant $c$, the set $\mathcal{P}\otimes\mathcal{P}_\mathcal{G}$ is compact under the topology induced by $\mathcal{W}_q$ for every $q\in[1,p)$.
\end{example}
\begin{proof}
	With the notion of Remark \ref{rem:conditions.compactness.kernels}, $\mathcal{P}_\mathcal{G}$ is $\mathcal{G}$-indistinguishable.
	Therefore, by the mentioned remark, upper semicontinuity of $\omega\mapsto \sup_{P\in\mathcal{P}_\mathcal{G}(\omega)} E_P[X]$ for $X\in C_b(\Omega)$ also implies $\mathcal{G}$-measurability of that mapping.
	To show upper semicontinuity, let $\omega_n\in\Omega$ be a sequence with $\omega_n\to\omega\in\Omega$.
	As $K$ is continuous, the set $\{K(\omega_n):n\in\mathbb{N}\}\cup\{K(\omega)\}$ is compact.
	This can be used to show that $\bigcup \{ \mathcal{P}_\mathcal{G}(\omega_n) : n\in\mathbb{N}\}\cup \mathcal{P}_\mathcal{G}(\omega)$ is relatively compact.
	Thus, if $P_n$ denotes a (near) maximizer for $\sup_{P\in\mathcal{P}_\mathcal{G}(\omega_n)} E_P[X]$ for each $n$,
	there exists $P$ and a subsequence, still denoted by $P_n$, which converges to $P$.
	Now, by lower semicontinuous of $\mathcal{W}_p$ (which follows e.g.~from the dual representation \cite[Theorem 5.9]{villani2008optimal}), it follows that 
	\[\mathcal{W}_p(P,K(\omega))
	\leq\liminf_n \Big( \mathcal{W}_p(P_n,K(\omega_n)) + \mathcal{W}_p(K(\omega_n),K(\omega)) \Big)
	\leq\delta.\]
	Moreover, for every $m\in\mathbb{N}$, the set $C_m:=\bigcup \{ [\omega_n]_\mathcal{G} : n\geq m\}\cup[\omega]_\mathcal{G}=\{ \eta\in\Omega : \phi(\eta)=\phi(\theta) \text{ for some } \theta\in\{\omega,\omega_m,\omega_{m+1},\dots\}\}$ is closed and $P_n(C_m)=1$ for all $n\geq m$.
	Therefore $P(C_m)=1$ for all $m$, hence $P([\omega]_\mathcal{G})=1$ and so $P\in\mathcal{P}_\mathcal{G}(\omega)$.
	This shows the desired upper semicontinuity.
	The second statement is a consequence of a characterization of the $\mathcal{W}_p$ topology \cite[Theorem 6.8]{villani2008optimal} and a Della-Valle-Poussin type result for probabilities \cite[Corollary A.47.]{follmer2011stochastic}.
\end{proof}

\subsection{Fubini's theorem}

Let $\Omega=\Omega_1\times\Omega_2$ be the product of two Polish spaces.
While Lemma \ref{lem:composition} can be seen as a nonlinear version of Fubini's theorem on the existence of the product of a measure and a kernel, one can also ask if there is a nonlinear version of Fubini's classical theorem (i.e.~on the possibility to interchange the order of integration when two measures are replaced by two sets of measures).
In general this is not true any more (take for example $\Omega_1:=\Omega_2:=[0,1]$ as well as $\mathcal{P}_1:=\{(\delta_0+\delta_1)/2\}$ and $\mathcal{P}_2:=\mathfrak{P}([0,1])$) but it is possible to characterize when interchanging the order is possible.

\begin{proposition}
\label{prop:fubini}
	Let $\mathcal{P}_1\subset\mathfrak{P}(\Omega_1)$ and $\mathcal{P}_2\subset\mathfrak{P}(\Omega_2)$ be two convex and compact sets of probabilities. Then it holds
	\begin{align*} 
	&\sup_{P\in\mathcal{P}_1} \int_{\Omega_1} \sup_{Q\in\mathcal{P}_2}\int_{\Omega_2} X(\omega_1,\omega_2)\,Q(d\omega_2) P(d\omega_1)\\
	&=\sup_{Q\in\mathcal{P}_2}\int_{\Omega_2} \sup_{P\in\mathcal{P}_1} \int_{\Omega_1} X(\omega_1,\omega_2)\,P(d\omega_1) Q(d\omega_2)
	\end{align*}
	for all $X\in\mathcal{L}(\Omega)$ if and only if
	\begin{align*} 
	&\{ P\otimes R: P\in\mathcal{P}_1 \text{ and }R\colon \Omega_1\to\mathfrak{P}(\Omega_2) \text{ kernel with }
	R(\cdot)\in\mathcal{P}_2\, P\text{-as}\}\\
	&=\{ (Q\otimes R)\circ\pi^{-1}: Q\in\mathcal{P}_2 \text{ and } R\colon \Omega_2\to\mathfrak{P}(\Omega_1) \text{ kernel with }
	R(\cdot)\in\mathcal{P}_1\, Q\text{-as}\},
	\end{align*}
	where $\pi\colon \Omega_1\times\Omega_2 \to \Omega_2\times\Omega_1$ is given by $\pi(\omega_1,\omega_2):=(\omega_2,\omega_1)$.
\end{proposition}
\begin{proof}
	The proof is similar to the one given for Theorem \ref{thm:compactness.of.product} but somewhat	notationally involved, and shall be skipped.
\end{proof}

\subsection{Controlled Brownian motion}

This last example is in the spirit of \cite{nutz2013constructing,peng2008multi}; it's purpose is to illustrate the results in a continuous time setting.
For some fixed time horizon $T>0$ let $\Omega:=C([0,T],\mathbb{R})$ endowed with raw filtration $(\mathcal{F}_t)_{t\in[0,T]}$ and a stopping time $\tau$.
Let $B$ be the canonical process on $\Omega$, denote by $W$ the Wiener measures, fix two numbers $0<\underline{\sigma}<\overline{\sigma}$, and write $\Sigma$ for the set of all progressively measurable processes $\sigma\colon[0,T]\times C([0,T])\to\mathbb{R}$ to $B$ which satisfy $\sigma\in[\underline{\sigma}^2,\overline{\sigma}^2]$ $W\times dt$-almost surely.
For $\sigma\in\Sigma$, denote by $B^{\omega,t,\sigma}:=\omega 1_{[0,t]}+(\omega(t)+\int_t^\cdot \sigma_s \,dB_s)1_{(t,T]}$ the Brownian motion with volatility $\sigma$ starting in $(t,\omega(t))$.

\begin{example}
\label{ex:G.brownian}
	The functional
	\[ \mathcal{E}(X|\mathcal{F}_\tau)(\omega):=\sup_{\sigma\in\Sigma} E_W[X(B^{\omega,\tau(\omega),\sigma})] 
	\quad\text{for } \omega\in \Omega \text{ and } X\in\mathcal{L}(\mathcal{F}).\]
	defines a conditional sublinear expectation.	
\end{example}
\begin{proof}
	First note that by Galmarino's test, $\mathcal{F}_\tau$ is countably generated.
	Moreover, Galmarino's test extends to all $\mathcal{F}$-upper semianalytic functions.
	Therefore $\mathcal{E}(X|\mathcal{F}_\tau)=X$ for every $X\in\mathcal{L}(\mathcal{F}_\tau)$.
	To show that $\mathcal{E}(\cdot|\mathcal{F}_\tau)$ maps into $\mathcal{L}(\mathcal{F}_\tau)$, endow $\Sigma$ with the norm $\|\sigma\|:=E_W[\int_0^T\sigma_s^2 \,ds]^{1/2}$ which renders $\Sigma$ a Polish space.
	Then the mapping
	\[\Omega\times \Sigma \ni (\omega,\sigma)\mapsto 
	P^{\omega,\tau(\omega),\sigma}:=W\circ (B^{\omega,\tau(\omega),\sigma})^{-1} \in \mathfrak{P}(\Omega)\]
	is $\mathcal{F}\otimes\mathcal{B}(\Sigma)$-measurable.
	Indeed, for Lipschitz-continuous $X\colon\Omega\to\mathbb{R}$, this is a consequence of Doob's inequality, and a monotone class argument yields that this carries over to all bounded $\mathcal{F}$-measurable $X\colon\Omega\to\mathbb{R}$.
	Thus $(\omega,\sigma)\mapsto P^{\omega,\tau(\omega),\sigma}$ is $\mathcal{F}\otimes\mathcal{B}(\Sigma)$-measurable \cite[Proposition 7.26]{bertsekas1978stochastic}, hence for every $X\in\mathcal{L}(\mathcal{F})$
	\[(\omega,\sigma)\to E_{P^{\omega,\tau(\omega),\sigma}}[X]=E_W[X(B^{\omega,\tau(\omega),\sigma})]\]
	is $\mathcal{F}\otimes\mathcal{B}(\Sigma)$-upper semianalytic \cite[Proposition 7.48]{bertsekas1978stochastic}.
	Therefore $\mathcal{E}(X|\mathcal{F}_\tau)$ is $\mathcal{F}$-upper semianalytic \cite[Corollary 6.10.10]{bogachev2007measure}, and Gamarino's test implies that it is actually $\mathcal{F}_\tau$-upper semianalytic.
\end{proof}

\appendix

\section{Analytic sets}
\label{sec:appendix.analytic}

Let $(S,\mathcal{A})$ be a measurable space.
Then a subset $A\subset S$ is call $\mathcal{A}$-analytic if it is the nucleus of a Suslin-scheme, that is, there are sets $A_{n_1,\dots,n_k}\in\mathcal{A}$ for every $k$ and $(n_1,\dots,n_k)\in\mathbb{N}^k$ such that $A=\bigcup_{(n_k)\in\mathbb{N}^\mathbb{N}}\bigcap_{k\in\mathbb{N}} A_{n_1,\dots,n_k}$.
The set of all $\mathcal{A}$-analytic sets is stable under countable intersections and unions, however, not under complementation. 
Another (useful) representation of $\mathcal{A}$-analytic sets is through the projection of higher dimensional sets, see e.g.~\cite[Chapter 6.10(ii)]{bogachev2007measure}.
A function $f\colon S\to[-\infty,+\infty]$ is call $\mathcal{A}$-upper (resp.~lower) semianalytic, if $\{f\geq c\}$ (resp.~$\{f\leq c\}$) is a $\mathcal{A}$-analytic set for every $c\in\mathbb{R}$.
If $S$ is a Polish space together with Borel $\sigma$-field $\mathcal{A}=\mathcal{B}(\mathcal{S})$, it follows from the definition of $\mathcal{B}(S)$-analytic sets that every Borel set is $\mathcal{B}$-analytic, and from Lusin's theorem \cite[Proposition 7.42]{bertsekas1978stochastic} that every $\mathcal{B}(S)$-analytic set is universally measurable.
The same of course holds true if sets are replaced by functions in the previous sentence.
For a countable family $\{X_n: n\in\mathbb{N}\}$ of $\mathcal{A}$-upper semianalytic functions, $X_1+X_2$, $\sup_n X_n$, and $\inf_n X_n$ are again $\mathcal{A}$-upper semianalytic.
A comprehensive treatment of analytic sets well suited for the present setting can be found in \cite[Chapter 7]{bertsekas1978stochastic} or \cite{bogachev2007measure}.	

\section{Choquet's theorem}
\label{sec:app.choquet}

For convenience, a brief sketch the proof of Choquet's theorem for nonlinear expectations is given below;
a detailed proof is given e.g.~in \cite[Section 2]{bartl2017robust}.
Let $\mathcal{E}(\cdot)\colon\mathcal{L}(\mathcal{F})\to\mathbb{R}$ be an increasing convex functional which preserves the constants, is continuous from above on $C_b(\Omega)$, and continuous from below on $\mathcal{L}(\mathcal{F})$.
By monotonicity, $\mathcal{E}(\cdot)$ is continuous w.r.t.~the maximum norm on $C_b(\Omega)$ so that the Fenchel-Moreau / Hahn-Banach theorem implies
\[ \mathcal{E}(X)=\max_{P\in C_b(\Omega)^\ast} (\langle X,P \rangle -\mathcal{E}^\ast(P)) \quad\text{for } X\in C_b(\Omega), \]
where $\mathcal{E}^\ast(P):=\sup_{X\in C_b(\Omega)} (\langle X,P \rangle - \mathcal{E}(X))$ and $C_b(\Omega)^\ast$ denotes the topological dual of $(C_b(\Omega),\|\cdot\|_\infty)$.
For every $P$ with $\mathcal{E}^\ast(P)<+\infty$ a scaling argument implies that $P$ needs to be an increasing functional satisfying $\langle 1,P\rangle=1$ (indeed, if for example $\langle 1,P\rangle\neq 1$, then $\mathcal{E}^\ast(P)\geq\sup_{\lambda\in\mathbb{R}}(\langle \lambda,P\rangle -\lambda)=+\infty$).
Moreover, as $\mathcal{E}(\cdot)$ is continuous from above on $C_b(\Omega)$, one can show that every $P$ with $\mathcal{E}^\ast(P)<+\infty$ has this property as well.
Therefore, by the Daniell-Stone theorem, $P$ can be viewed as a probability on $\sigma(C_b(\Omega))=\mathcal{F}$ and $\langle X,P\rangle=E_P[X]$ for all $X\in C_b(\Omega)$.
This implies that
\[ \mathcal{E}(X)=\max_{P\in\mathfrak{P}(\Omega)} (E_P[X] -\mathcal{E}^\ast(P)) \quad\text{for } X\in C_b(\Omega) \]
and, using a minimax theorem (similar as in the proof of Theorem \ref{thm:cond.rep}), this equality extends to $X\in usc_b(\Omega)$.
In fact, as a closed subset of the unit sphere, $\{\mathcal{E}^\ast\leq c\}$ is compact in $\sigma(C_b(\Omega)^\ast,C_b(\Omega))$  by the Banach-Alaoglu theorem. 
As one can further show that $\{\mathcal{E}^\ast\leq c\}$ is uniformly continuous from above, using the Daniell-Stone theorem once more, one obtains that $\{\mathcal{E}^\ast\leq c\}$, as a subset of $\mathfrak{P}(\Omega)$, is weakly compact.
Now notice that $\mathcal{E}(\cdot)$ is a (functional) capacity in the sense of Choquet by assumption, therefore his regularity result yields
\[ \mathcal{E}(X)=\sup_{Y\leq X, \,Y\in usc_b(\Omega)} \mathcal{E}(Y)
= \sup_{Y\leq X, \,Y\in usc_b(\Omega)} \max_{P\in\mathfrak{P}(\Omega)} (E_P[Y] -\mathcal{E}^\ast(P))\] 
for every function $X$ which can be written as the nucleus of a Suslin scheme in $C_b(\Omega)$, see \cite[Section 3]{choquet1959forme}, that is, for every $X\in\mathcal{L}(\mathcal{F})$.
The representation $\mathcal{E}(X)=\sup_{P\in\mathfrak{P}(\Omega)} (E_P[X] -\mathcal{E}^\ast(P))$ for $X\in\mathcal{L}(\mathcal{F})$ now follows from the representation of $\mathcal{E}(Y)$ for $Y\in usc_b(\Omega)$, interchanging two suprema, and the fact that $\sup_{Y\leq X, \,Y\in usc_b(\Omega)} E_P[Y]=E_P[X]$ (to see this apply for example Choquet's results to $E_P[\cdot]$).
Finally, in case that $\mathcal{E}(\cdot)$ is sublinear, it follows from a scaling argument that as in the proof of Theorem \ref{thm:cond.rep} that $\mathcal{E}^\ast$ only takes the values $0$ and $+\infty$.
To recover the stated in the introduction it therefore remains to set $\mathcal{P}:=\{ P\in\mathfrak{P}(\Omega) : \mathcal{E}^\ast(P)=0\}$.

\vspace{1.5em}
{\bf Acknowledgment:}
The author would like to thank Samuel Drapeau, Hans F\"ollmer, Michael Kupper, and two anonymous referees for fruitful discussions and helpful suggestions.
The author has been funded by the Vienna Science and Technology Fund (WWTF) through project VRG17-005 and by the Austrian Science Fund (FWF) under grant Y00782.

\bibliographystyle{abbrv}

\end{document}